\documentclass[12pt]{article}
\textwidth=6.5in\textheight=8.5in
\oddsidemargin=0in\evensidemargin=\oddsidemargin\topmargin=-.2in
\usepackage{amsmath}\usepackage{amssymb}\usepackage{amsfonts}\usepackage{amsthm}
\usepackage{color}
\usepackage{setspace}
\usepackage{bbm}
\usepackage{graphicx}
\usepackage[english]{babel}\usepackage[latin1]{inputenc}\usepackage{times}\usepackage[T1]{fontenc}
\usepackage{epstopdf}
\usepackage{epsfig}

\newtheorem{thm}{\bf{Theorem}}[section]
\newtheorem{lem}[thm]{\bf{Lemma}}
\newtheorem{note}[thm]{\bf{Note}}
\newtheorem{df}[thm]{\bf{Definition}}
\newtheorem{cor}[thm]{\bf{Corollary}}

\newtheorem{prop}[thm]{\bf{Proposition}}
\newtheorem{fact}[thm]{\bf{Fact}}
\newtheorem{ex}[thm]{\bf{Example}}

\numberwithin{equation}{section}

\newcommand{\dom}{\operatorname{dom}}

\newcommand{\epi}{\operatorname{epi}}

\newcommand{\trace}{\operatorname{trace}}
\newcommand{\lin}{\operatorname{lin}}

\newcommand{\Id}{\operatorname{Id}}

\newcommand{\sgn}{\operatorname{sgn}}

\newcommand{\conv}{\operatornamewithlimits{conv}}
\newcommand{\argmin}{\operatornamewithlimits{argmin}}

\newcommand{\ran}{\operatorname{ran}}

\newcommand{\ri}{\operatorname{ri}}

\newcommand{\Span}{\operatornamewithlimits{span}}

\newcommand{\R}{\operatorname{\mathbb{R}}}

\newcommand{\U}{\operatorname{\mathcal{U}}}
\newcommand{\M}{\operatorname{\mathcal{M}}}
\newcommand{\V}{\operatorname{\mathcal{V}}}

\newcommand{\VU}{\operatorname{\mathcal{VU}}}
\newcommand{\OU}{\operatorname{\overline{U}}}
\newcommand{\OV}{\operatorname{\overline{V}}}

\newcommand{\Proj}{\operatorname{Proj}}

\begin{document}

\title{The chain rule for $\V\U$-decompositions of nonsmooth functions}

\author{W. Hare\thanks{Mathematics, University of British Columbia Okanagan, Kelowna, B.C., V1V 1V7, Canada. Research by this author was partially supported by NSERC Discovery Grant \#2018-03865. warren.hare@ubc.ca}\and C. Planiden\thanks{Mathematics and Applied Statistics, University of Wollongong, Wollongong, NSW, 2522, Australia. Research by this author was supported by UBC UGF and by NSERC Canada. chayne@uow.edu.au}\and C. Sagastiz\'abal\thanks{
IMECC - UNICAMP, 13083-859, Campinas, SP, Brazil. Research by this author
was partially supported by CNPq Grant 303905/2015-8, by FAPERJ, and CEMEAI, Brazil.
sagastiz@unicamp.br}}
\maketitle

\begin{abstract}
In Variational Analysis, $\VU$-theory provides a set of tools that is helpful for understanding and exploiting the structure of nonsmooth functions. The theory takes advantage of the fact that at any point, the space can be separated into two orthogonal subspaces: one that describes the direction of nonsmoothness of the function, and the other on which the function behaves smoothly and has a gradient. For a composite function, this work establishes a chain rule that facilitates the computation of such gradients and characterizes the smooth subspace under reasonable conditions.  From the chain rule presented, formul\ae\/ for the separation, smooth perturbation and sum of functions are provided. Several nonsmooth examples are explored, including norm functions, max-of-quadratic functions and LASSO-type regularizations.
\end{abstract}

\section{Introduction}\label{sec:introduction}

Nonsmooth optimization methods face the challenge of slow convergence rates. When dealing with smooth functions, there are well-known minimization techniques that can achieve superlinear or quadratic convergence to a minimizer. In the nonsmooth setting, however, algorithms with that level of convergence speed remain elusive. There exist many approaches to nonsmooth minimization, such as proximal methods \cite{bello2011strongly,chen2012smoothing,kiwiel1985methods,nesterov2005smooth,Teboulle2018}, bundle methods \cite{belloni2009dynamic,hare2018computing,hou2008global,hareetal16}, trust-region methods \cite{akbari2015new,conn2000trust,desampaio1997trust,martinez1997trust}, conjugate gradient and gradient sampling methods \cite{burke2005robust,min2011linearly}, all of which have linear convergence at best \cite{karmitsa2012comparing,wen2017linear,xiao2015convergence}.\par 
A better understanding of the structure underlying a nonsmooth function is instrumental in improving this shortcoming. In this paper, we concentrate on a particular partitioning of the domain space called \emph{$\VU$-decomposition}; see \cite{sagastizabal-icm2018} and references therein. Given an objective function and a current point of interest, the $\VU$-decomposition splits the space into two orthogonal subspaces. In doing so, we may take advantage of the fact that locally the objective function is nonsmooth parallel to one of the subspaces (the $\V$-space), while parallel to the remaining subspace (the $\U$-space) the function is smooth. This allows the optimizer to exploit the smoothness in the $\U$-space and calculate useful objects such as the \emph{$\U$-Lagrangian} and the \emph{$\U$-Hessian}, which are defined and explained in detail in Section \ref{sec:vudecomposition}. The $\VU$-decomposition is used in a nonsmooth minimization algorithm called the $\VU$-algorithm, which has been proved superlinearly convergent in the convex case \cite{vualg}.\par Since its inception \cite{ulagconv,lemarechal-sagastizabal-1997}, $\VU$-theory has been explored and expanded in both the convex \cite{Hare2014,hare2019derivative,mifflin2000functions,vutheory,mifflin2003primal,vudecomp} and nonconvex \cite{proxbund,hareetal16,vusmoothness} settings. Of particular interest to the present work is the progress defining and working with the \emph{primal-dual gradient (PDG)} structured functions that have {\em fast tracks} \cite{mifflin2000functions,vutheory,mifflin2002proximal,mifflin2003primal}. Fast tracks provide structural information for PDG functions, even if strong transversality does not hold. These terms are defined formally in the next section; we mention here that the existence of a fast track is the property that allows the $\VU$-algorithm to identify points with favourable $\VU$-decompositions and thereby converge superlinearly \cite{liu-sagastizabal,mifflin2002proximal}.\par The goal of this paper is the advancement of $\VU$-theory of PDG functions with fast tracks, focusing on calculus of the $\VU$-decomposition and the gradient of the $\U$-Lagrangian. The calculus formul\ae\/ are derived for nonsmooth functions $f$ resulting from the composition of a $\mathcal{C}^2$ vector mapping $\Phi:\R^m\to\R^n$ with a convex function $h:\R^n\to\R$, i.e., $f=h\circ\Phi$. We establish a new equivalency between the gradient of the $\U$-Lagrangian and what we term the \emph{$\U$-gradient}, and we construct calculus rules for the $\U$-gradient. The general approach starts as in \cite{Hare06} which, after drawing a relation between $\VU$-structures and partly smooth functions, applies the chain rule in \cite{Lewis02}. For the considered setting, however, the development is not straightforward, as the aforementioned chain rule does not explore gradient structure.\par The main result of this work is Theorem \ref{thm:main}, providing expressions for both the $\U$-space and the $\U$-gradient of the function $f$ in terms of the $\VU$-decomposition of $h$ and the Jacobian of $\Phi$. Based on that result, we derive a separability rule, a smooth perturbation rule and a sum rule. Since our formul\ae\/ are obtained without assuming strong transversality, they can be applied to compute $\U$-gradients for $\ell_1$- regularized functions, including in particular the objective functions of the well-known LASSO problems. \par The remainder of this paper is organized as follows. The rest of Section \ref{sec:introduction} provides general notation used throughout. Section \ref{sec:vudecomposition} contains definitions of the relevant function classes and the $\VU$-decomposition objects, as well as the basics of $\VU$-theory. Section \ref{sec:partialsmoothness} outlines the important relationship between fast tracks and partly smooth functions. The difference between transversality and nondegeneracy of partly smooth functions is also discussed. Section \ref{sec:ugradient} shows how the gradient of the $\U$-Lagrangian and the $\U$-gradient are related and renders the chain rule for functions with fast tracks. In Section \ref{sec:examples}, we lay out the rules for the smooth perturbation and the sum of functions, and present a set of examples to illustrate those results; we explore convex finite-max functions, LASSO functions and $\ell_1$-regularized functions. Section \ref{sec:conclusion} makes some summarizing remarks and suggests avenues of future research in this area.

\subsection*{Notation}\label{sec:notation}

We generally use the notation of \cite{rockwets}. We denote $\R\cup\{+\infty\}$ by $\overline{\R}$. The identity matrix is denoted by $\Id$. The open ball of radius $\delta$ about the point $\bar{x}$ is denoted by $B_\delta(\bar{x})$. The domain and range of $f$ are denoted by $\dom f$ and $\ran f$, respectively. The indicator function of $S$ is denoted by $\iota_S(x)$. The projection mapping onto $S$ is defined by $\Proj_S(x)=\argmin_{s \in S}\{\|s - x\|\}$. The relative interior of $S$ is denoted by $\ri S$. The epigraph of $f$ is defined by $\epi f=\{(x,\alpha):\alpha\geq f(x)\}$.

\section{$\VU$-decomposition}\label{sec:vudecomposition}

\subsection{Primal-dual gradient structure and fast tracks}

The $\VU$-decomposition of $\R^n$ for a function $f$ at a point $\bar{x}$ was originally defined for $f$ convex \cite{ulagconv,lemarechal-sagastizabal-1997} and has since been generalized to lower semicontinuous (lsc) functions that have PDG structure and fast tracks \cite{vusmoothness}. These concepts and other pertinent terms are contained in this section.  To define PDG structures, we first recall the following definitions.
\begin{df}[Subgradients and Subdifferentials] Consider a function $f:\R^n\to\overline{\R}$ and a point $\bar{x}$ with $f(\bar{x})\in\R$. A vector $g\in\R^n$ is a
\begin{itemize}
\item[\rm(i)] \emph{regular subgradient} of $f$ at $\bar{x}$, written $g\in\hat\partial f(\bar{x})$, if
$$f(x)\geq f(\bar{x})+g^\top(x-\bar{x})+o(\|x-\bar{x}\|);$$
\item[\rm(ii)]\emph{(general) subgradient} of $f$ at $\bar{x}$, written $g\in\partial f(\bar{x})$, if there exist sequences $x_k\underset{f}\to\bar{x}$ and $g_k\to g$ with $g_k\in\hat\partial f(x_k)$;
\item[\rm(iii)]\emph{horizon subgradient} of $f$ at $\bar{x}$, written $g\in\partial^\infty f(\bar{x})$, if the same holds as in (ii) except that instead of $g_k\to g$, one has $\lambda_kg_k\to g$ for some sequence $\lambda_k\searrow0$.
\end{itemize}
The sets $\hat\partial f(\bar{x})$, $\partial f(\bar{x})$ and $\partial^\infty f(\bar{x})$ are called the \emph{regular subdifferential}, \emph{(limiting) subdifferential} and \emph{horizon subdifferential} of $f$ at $\bar{x}$, respectively.\end{df}
\begin{df}[Cones]Consider a set $S$ and a point $\bar{x}$;
\begin{itemize}
\item[\rm(i)] the \emph{tangent cone} to $S$ at $\bar{x}$ is defined by
$$T_S(\bar{x})=\limsup\limits_{\tau\searrow0}\frac{1}{\tau}(S-\bar{x});$$
\item[\rm(ii)] the \emph{regular normal cone} to $S$ at $\bar{x}$ is defined by
$$\widehat N_S(\bar{x})=\{g\in\R^n:g^\top(x-\bar{x})\leq o(\|x-\bar{x}\|)~\forall x\in S\};$$ 
\item[\rm(iii)] the \emph{normal cone} to $S$ at $\bar{x}$ is defined by $$N_S(\bar{x})=\{g\in\R^n:~\exists~x_k\underset S\to\bar{x},g_k\to g\mbox{ \emph{with} }g_k\in\hat N_S(\bar{x})\}.$$
\end{itemize}\end{df}
\begin{fact}[\cite{rockwets} Theorem 8.9]\label{fact1}
For $f:\R^n\to\R$ and any point $\bar{x}\in\dom f$, we have$$\partial^\infty f(\bar{x})\subseteq\{g\in\R^n:(g,0)\in N_{\epi f}(\bar{x},f(\bar{x}))\},$$and the inclusion is an equality whenever $f$ is lsc near $\bar{x}$.\end{fact}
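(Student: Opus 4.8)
This is quoted as [\cite{rockwets}, Theorem 8.9], so the argument is classical epigraphical geometry; the plan is to route everything through the link between regular subgradients and regular normals of the epigraph and then pass to limits. I would take for granted the elementary companion identity --- the first part of the same theorem, proved by matching the two defining inequalities --- that for any $x$ with $f(x)$ finite, $g\in\hat\partial f(x)$ if and only if $(g,-1)\in\widehat N_{\epi f}(x,f(x))$, together with its limiting version $\partial f(x)=\{g:(g,-1)\in N_{\epi f}(x,f(x))\}$. I also record one observation requiring no hypothesis on $f$: at any $(x,\alpha)\in\epi f$ the direction $(0,1)$ is tangent to $\epi f$, so every $(v,\beta)\in\widehat N_{\epi f}(x,\alpha)$ has $\beta\le0$; and if moreover $\alpha>f(x)$, then $(0,-1)$ is tangent as well, forcing $\beta=0$.

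For ``$\subseteq$'', which will use nothing beyond $f(\bar{x})\in\R$, I would start from the definition of $\partial^\infty f(\bar{x})$: given $g$ in it, pick $x_k\underset{f}{\to}\bar{x}$, $g_k\in\hat\partial f(x_k)$ and $\lambda_k\searrow0$ with $\lambda_kg_k\to g$, and lift each $g_k$ to $(g_k,-1)\in\widehat N_{\epi f}(x_k,f(x_k))$. Since regular normal cones are cones, $(\lambda_kg_k,-\lambda_k)=\lambda_k(g_k,-1)\in\widehat N_{\epi f}(x_k,f(x_k))$; the base points $(x_k,f(x_k))$ converge to $(\bar{x},f(\bar{x}))$ inside $\epi f$ --- which is exactly what $x_k\underset{f}{\to}\bar{x}$ provides --- while $(\lambda_kg_k,-\lambda_k)\to(g,0)$, so the definition of the limiting normal cone as the outer limit of regular normal cones yields $(g,0)\in N_{\epi f}(\bar{x},f(\bar{x}))$.

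The reverse inclusion is the substantive part, and is where $f$ lsc near $\bar{x}$ enters. Given $(g,0)\in N_{\epi f}(\bar{x},f(\bar{x}))$ --- and assuming $g\neq0$, since $0\in\partial^\infty f(\bar{x})$ always --- a defining sequence supplies $(x_k,\alpha_k)\underset{\epi f}{\to}(\bar{x},f(\bar{x}))$ and $(v_k,\beta_k)\in\widehat N_{\epi f}(x_k,\alpha_k)$ with $(v_k,\beta_k)\to(g,0)$ and $\beta_k\le0$. The favourable case is $\beta_k<0$ along a subsequence: then $\alpha_k=f(x_k)$ there, $g_k:=-v_k/\beta_k\in\hat\partial f(x_k)$, and with $\lambda_k:=-\beta_k$ (monotone after passing to a further subsequence) one gets $\lambda_kg_k=v_k\to g$, $\lambda_k\searrow0$ and $x_k\underset{f}{\to}\bar{x}$, whence $g\in\partial^\infty f(\bar{x})$. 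The genuine obstacle is the complementary case $\beta_k=0$ for all large $k$, where $g$ is approached only by \emph{horizontal} regular normals to $\epi f$; when $f$ is not lsc such normals need not come from any horizon subgradient, so lsc must genuinely be invoked. The plan is to reduce to the favourable case by tilting the approximating normals downward: work with proximal normals (for the closed set $\epi f$ these also generate the limiting normal cone as an outer limit), each certified by an externally supporting ball. Lsc forces $\alpha_k\to f(\bar{x})$ with $\alpha_k\ge f(x_k)$ and $\liminf f(x_k)\ge f(\bar{x})$, hence $\alpha_k-f(x_k)\to0$, so pushing the centre of such a ball down by a little more than $\alpha_k-f(x_k)$ and resetting the radius to the new distance to $\epi f$ --- positive and attained near $(\bar{x},f(\bar{x}))$ by closedness --- should produce a proximal normal at a nearby graph point with strictly negative last coordinate tending to $0$ and direction tending to $g$, i.e.\ exactly the tilted sequence the favourable case needs. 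Turning this perturbation idea into a clean argument --- in particular verifying that the tilted normals really converge to $(g,0)$ --- is the step I expect to be the main difficulty; the rest is routine manipulation of the cone and subgradient definitions.
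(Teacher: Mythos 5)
The paper offers no argument for this statement: it is imported verbatim as \cite{rockwets}, Theorem~8.9, so your proposal has to be judged on its own merits rather than against an internal proof. Your architecture is the standard one. The inclusion $\partial^\infty f(\bar x)\subseteq\{g:(g,0)\in N_{\epi f}(\bar x,f(\bar x))\}$ via scaling the identity $g\in\hat\partial f(x)\Leftrightarrow(g,-1)\in\widehat N_{\epi f}(x,f(x))$ and passing to the outer limit is complete and correct (and, as you note, needs no lower semicontinuity). In the reverse direction, the sub-case where the approximating regular normals $(v_k,\beta_k)\to(g,0)$ have $\beta_k<0$ along a subsequence is also handled correctly: $\beta_k<0$ forces $\alpha_k=f(x_k)$, and rescaling by $\lambda_k=-\beta_k$ produces exactly the sequences demanded by the definition of $\partial^\infty f(\bar x)$.

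The gap is the remaining sub-case, $\beta_k=0$ for all large $k$, which is the entire substantive content of the lsc equality and which you leave as a plan. The plan, as stated, does not go through. If $(v_k,0)$ is a proximal normal at $p_k=(x_k,\alpha_k)$ certified by a ball of radius $r_k$ centred at $c_k$, and you lower the centre to $c_k'=c_k-(0,\delta_k)$, the only a priori constraint on the new nearest point $p_k'$ is that it lies in the thin annulus $\{q:r_k\le\|q-c_k\|\le r_k+2\delta_k\}$ with $(q-c_k)_{n+1}\le 0$; nothing forces $p_k'$ to be angularly close to $p_k$. Indeed $\epi f$ may approach the sphere $\partial B_{r_k}(c_k)$ to within $\varepsilon\ll\delta_k$ at a point in a completely different direction, in which case the new proximal normal $c_k'-p_k'$ points nowhere near $(g,0)$. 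So the step you flag as ``the main difficulty'' --- verifying that the tilted normals converge to $(g,0)$ --- is not a routine verification but the actual theorem; it requires either coupling $\delta_k$ to the geometry of $\epi f$ near $p_k$ (not just to $\alpha_k-f(x_k)$) or a different mechanism altogether, which is what the proof in \cite{rockwets} supplies. There is also a secondary unaddressed point: guaranteeing that the perturbed normal has \emph{strictly} negative last coordinate requires the lowered centre to sit strictly below $\inf f$ over the region where the projection can land (otherwise the projection may again return a horizontal normal at a point of $\epi f$ at the centre's height), and arranging this while keeping $\delta_k\to0$ is a quantitative use of lower semicontinuity that the sketch does not make. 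As it stands, the reverse inclusion is asserted rather than proved.
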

We next define a $d$-dimensional $\mathcal{C}^2$-manifold.  Some literature refers to this as a submanifold, as it is embedded it in $\R^n$.  We use the term manifold in consistency with the notation of \cite{rockwets}.
\begin{df}[Manifold]
The set $\M\subseteq\R^n$ is a $d$-dimensional \emph{smooth $\mathcal{C}^2$-manifold} in $\R^n$ about the point $\bar{x}\in\M$ if $\M$ can be represented relative to an open neighbourhood $O(\bar{x})$ as the set of solution vectors to $F(x)=0$, where $F:O\to\R^m$ is a $\mathcal{C}^2$ mapping 
whose $m\times n$ Jacobian matrix $\nabla F(\bar{x})$ is surjective and has full rank $m=n-d$.\end{df}
\noindent When thinking of manifolds, it is useful to recall that the normal and tangent cones to manifolds are well-behaved subspaces. To that end, we remind the reader of the concept of a Clarke regular function.
\begin{df}[Clarke Regularity]A function $f:\R^n\to\overline{\R}$ is \emph{Clarke regular} at $\bar{x}$ if $$\widehat N_{\epi f}(\bar{x})=N_{\epi f}(\bar{x}).$$\end{df}
\begin{fact}\emph{\cite[Exercise 6.8]{rockwets}}
Let $\M\subseteq \R^n$ be a $d$-dimensional manifold about $\bar{x}$, with associated defining function $F:\R^n\to\overline{\R}$ such that $\nabla F(\bar{x})$ is of rank $m=n-d$. Then $\M$ is Clarke regular and geometrically derivable at $\bar{x}$, and the tangent and normal cones to $\M$ at $\bar{x}$ are linear subspaces orthogonally complementary to each other:
$$T_{\M}(\bar{x})=\{w\in\R^n:\nabla F(\bar{x})w=0\},\qquad N_{\M}(\bar{x})=\{\nabla F(\bar{x})^\top y:y\in\R^m\}.$$\end{fact}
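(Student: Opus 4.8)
The plan is to localize $\M$ by a $\mathcal{C}^2$ parametrization and thereby reduce the whole statement to linear algebra. Since $\nabla F(\bar{x})$ has full rank $m=n-d$, after permuting coordinates we may assume its last $m$ columns are linearly independent; writing $x=(u,v)\in\R^d\times\R^m$, the implicit function theorem yields a $\mathcal{C}^2$ map $\psi$ near $\bar{u}$ with $\psi(\bar{u})=\bar{v}$, $F(u,\psi(u))\equiv 0$, and (after shrinking the neighbourhood) $\M=\{(u,\psi(u))\}$ locally. Put $p(u)=(u,\psi(u))$: this is a $\mathcal{C}^2$ local chart for $\M$ with $p(\bar{u})=\bar{x}$ and $\nabla p(\bar{u})$ injective, so $L:=\ran\nabla p(\bar{u})$ is a $d$-dimensional subspace.

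First I would identify $T_{\M}(\bar{x})$. If $w=\lim_k\tau_k^{-1}(x_k-\bar{x})$ with $x_k\to_{\M}\bar{x}$ and $\tau_k\searrow 0$, then $0=F(x_k)-F(\bar{x})=\nabla F(\bar{x})(x_k-\bar{x})+o(\|x_k-\bar{x}\|)$; dividing by $\tau_k$ and using that $\|x_k-\bar{x}\|/\tau_k\to\|w\|$ is bounded, the remainder vanishes and $\nabla F(\bar{x})w=0$, so $T_{\M}(\bar{x})\subseteq\ker\nabla F(\bar{x})$. Conversely, differentiating $F\circ p\equiv 0$ gives $\nabla F(\bar{x})\nabla p(\bar{u})=0$, i.e. $L\subseteq\ker\nabla F(\bar{x})$; by rank--nullity $\dim\ker\nabla F(\bar{x})=n-m=d=\dim L$, so $L=\ker\nabla F(\bar{x})$, and for any $w\in L$, writing $w=\nabla p(\bar{u})z$, the curve $\tau\mapsto p(\bar{u}+\tau z)\in\M$ has right derivative $w$ at $\tau=0$, so $w$ is a derivable tangent vector. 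Hence $T_{\M}(\bar{x})=\ker\nabla F(\bar{x})=\{w:\nabla F(\bar{x})w=0\}$, a $d$-dimensional subspace, and since every element of the tangent cone is derivable, $\M$ is geometrically derivable at $\bar{x}$.

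Next I would pass to normal cones through the polarity relation $\widehat N_{\M}(\bar{x})=T_{\M}(\bar{x})^\circ$, which is immediate from the definitions (the inclusion ``$\subseteq$'' comes from dividing the defining inequality of $\widehat N_{\M}(\bar{x})$ by $\tau_k$; ``$\supseteq$'' comes from normalizing a sequence $x_k\to_{\M}\bar{x}$ that violates $\widehat N_{\M}(\bar{x})$-membership to extract a tangent direction violating polarity). As $T_{\M}(\bar{x})$ is a subspace, its polar equals its orthogonal complement, and $(\ker\nabla F(\bar{x}))^\perp=\ran\nabla F(\bar{x})^\top$, giving $\widehat N_{\M}(\bar{x})=\{\nabla F(\bar{x})^\top y:y\in\R^m\}$. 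Since the same computation runs at every $x\in\M$ near $\bar{x}$ (there $\nabla F(x)$ still has rank $m$, the rank being lower semicontinuous and at most $m$), $\widehat N_{\M}(x)=\ran\nabla F(x)^\top$; and as $x\to_{\M}\bar{x}$, $\nabla F(x)\to\nabla F(\bar{x})$ with constant rank $m$, so the range varies continuously and $N_{\M}(\bar{x})=\limsup_{x\to_{\M}\bar{x}}\widehat N_{\M}(x)=\ran\nabla F(\bar{x})^\top=\widehat N_{\M}(\bar{x})$, which is Clarke regularity. Finally, orthogonal complementarity of $T_{\M}(\bar{x})$ and $N_{\M}(\bar{x})$ is the identity $(\ker A)^\perp=\ran A^\top$ for $A=\nabla F(\bar{x})$, with dimensions $d$ and $m=n-d$.

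The only genuinely non-elementary ingredient is the implicit function theorem, and it is needed solely for the inclusion $\ker\nabla F(\bar{x})\subseteq T_{\M}(\bar{x})$ --- that is, to certify that $\M$ is ``thick enough'' near $\bar{x}$ to realize every direction killed by $\nabla F(\bar{x})$ by an actual curve in $\M$. The secondary technical point is the set-convergence argument behind $N_{\M}(\bar{x})=\widehat N_{\M}(\bar{x})$: one needs that the range of a matrix varies continuously when the rank is locally constant, so that the outer limit defining the limiting normal cone does not enlarge $\widehat N_{\M}(\bar{x})$. Both are standard, so once the chart $p$ is in hand the rest is bookkeeping. As an alternative organization, the entire statement is the special case $D=\{0\}$ of the tangent/normal-cone calculus for constraint systems $\{x:F(x)\in D\}$, with surjectivity of $\nabla F(\bar{x})$ serving as the constraint qualification; that route delivers all four conclusions at once.
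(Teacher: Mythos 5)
Your proof is correct. The paper offers no proof of this Fact --- it is imported directly from \cite[Exercise 6.8]{rockwets} --- and your argument (an implicit-function-theorem chart, the two-inclusion identification $T_{\M}(\bar{x})=\ker\nabla F(\bar{x})$ with derivability coming from the curves $\tau\mapsto p(\bar{u}+\tau z)$, the regular normal cone as the polar of the tangent subspace, and Clarke regularity via local constancy of the rank) is essentially the standard proof that the reference itself suggests; the only step worth spelling out fully is the boundedness of the multipliers $y_k$ when showing that the outer limit defining $N_{\M}(\bar{x})$ does not exceed $\ran\nabla F(\bar{x})^\top$, which follows from the uniform invertibility of $\nabla F(x)\nabla F(x)^\top$ for $x\in\M$ near $\bar{x}$.
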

\noindent We are now ready to define PDG structures.
\begin{df}[PDG structure]\label{df:pdg}
An lsc function $f:\R^n\to\overline{\R}$ has \emph{primal-dual gradient (PDG)} structure at a point $\bar{x}$ relative to the ${m_1+m_2}$-dimensional manifold $\M\subseteq \R^n$ if there exist functions $\{f_i\}_{i=0}^{m_1}$ and $\{\varphi_j\}_{j=1}^{m_2}$ that are $\mathcal{C}^2$ on a ball $B_\delta(\bar{x})$, and a closed convex set $\Delta\subseteq\R^{m_1+1+m_2}$, that locally satisfy
\begin{itemize}
\item[\rm(i)] $\bar{x}\in\{x\in B_\delta(\bar{x}):f_i(x)=f(x),i=0,\ldots,m_1;\varphi_j(x)=0,j=1,\ldots,m_2\}\subseteq\M\cap B_\delta(\bar{x})$;
\item[\rm(ii)]if $(\alpha,\beta)=(\alpha_0,\alpha_1,\ldots\alpha_{m_1},\beta_1,\beta_2,\ldots,\beta_{m_2})\in\Delta,$ then $\alpha$ is an element of the canonical simplex $\Delta_1:$
$$\alpha\in\Delta_1=\left\{(\alpha_0,\alpha_1,\ldots,\alpha_{m_1}):\sum\limits_{i=0}^{m_1}\alpha_i=1,\alpha_i\geq0\right\};$$
\item[\rm(iii)] the $i$\textsuperscript{th} canonical vector is in $\Delta$ for $i=0,1,\ldots,m_1$;
\item[\rm(iv)]for each $\bar{j}=1,2,\ldots,m_2,$ there exists $(\alpha,\beta)\in\Delta$ such that $\beta_{\bar{j}}\neq0$ and $\beta_j=0$ for $j\neq\bar{j}$;
\item[\rm(v)] for each $x\in\M\cap B_\delta(\bar{x})$, we have
\begin{itemize}
\item[\rm(a)] $f(x)=f_i(x)$ for some $i$, and
\item[\rm(b)] $g\in \partial f(x)$ if and only if
$$g=\sum\limits_{i=0}^{m_1}\alpha_i\nabla f_i(x)+\sum\limits_{j=1}^{m_2}\beta_j\nabla\varphi_j(x),$$
where $(\alpha,\beta)\in\Delta$ satisfies
$$\begin{cases}
\alpha_i=0&\mbox{if }f_i(x)\neq f(x),\\\beta_j=0&\mbox{if }\varphi_j(x)\neq0.
\end{cases}$$
\end{itemize}\end{itemize}\end{df}
\noindent In Definition \ref{df:pdg}, the functions $f_i$ and $\varphi_j$, and the set $\Delta$, account for the primal and dual structural information, respectively. The simplest instance of PDG structure is given by the scalar absolute value function $f(x)=|x|$ at $\bar x=0$, for which $f_0(x)=-f_1(x)=x$, there is no $\varphi$-function, and the dual set is the canonical simplex in $\R^2$. More elaborate examples are considered in Section \ref{ss-expdg} (and \cite{mifflin2000functions}).\par The  property of strong transversality given below is associated with particularly well-behaved PDG structured functions.
\begin{df}[strong transversality]\label{strongtrans}
For a PDG structured function, 
the collection of primal information, $f_i,\varphi_j:\R^n\to\overline{\R}$, 
given by index sets $i \in \{0, 1, ..., m_1\}$ and $j \in \{1, ..., m_2\}$ is \emph{strongly transversal} to the $(m_1+m_2)$-dimensional manifold $\mathcal{M}\subseteq\R^n$ if the $n\times(m_1+m_2)$ matrix
\begin{equation}\label{eq:st}
\overline{V}=\left[\{\nabla f_i(\bar{x})-\nabla f_0(\bar{x})\}_{i=1}^{m_1},\{\nabla\varphi_j(\bar{x})\}_{j=1}^{m_2}\right]\end{equation}
has full column rank.\end{df}
\noindent For the absolute-value function, $\overline{V}=\left[\{-1-1\}\right]=\left[-2\right]$ is trivially strongly transversal. We mention in passing that the PDG representation is not unique; the PDG construct in Example~\ref{ex:ell1reg}, given for $\ell_1$-regularized functions, provides an alternative structure for the absolute-value function that is not strongly transversal (the corresponding matrix $\overline{V}$ is $1\times 2$). Strong transversality is related to the linear independence of the gradients of the primal functions in \eqref{eq:st} and, hence, to the fact that the structure is defined without any redundant information (see the comments after Theorem \ref{thm:fast track}).

\subsubsection{Examples of PDG structure}\label{ss-expdg}

Two examples of common classes of functions that have PDG structure are convex finite-max functions and maximum eigenvalue functions \cite{mifflin2003primal}.  In this section, we quickly review these examples.  In addition, we examine the $\ell_1$-regularization problem. For the first example, we require the definition of active set.
\begin{df}[Active set]
Let $f:\R^n\to\overline{\R}$ be a finite-max function, i.e. the pointwise maximum of a finite set of $\mathcal{C}^2$ functions:$$f=\max_if_i,~f_i\in\mathcal{C}^2,~i=0,1,\ldots,p.$$ The \emph{active set} $A(\bar{x})$ of $f$ at $\bar{x}\in\dom f$ is the set of all subindices $i$ such that $f_i(\bar{x})=f(\bar{x})$.
\end{df}
\begin{note} Henceforth, we assume without loss of generality that $0\in A(\bar{x})$, reordering subindices if required.\end{note}
\begin{ex}[Finite-max]\label{ex:finitemax}
Let $f:\R^n\to\overline{\R}$ be a finite-max function. Then $f$ has PDG structure at any point $\bar{x}\in\R^n$. If, in addition, the set $\{\nabla f_i(\bar{x})-\nabla f_0(\bar{x}):i\in A(\bar{x})\setminus\{0\}\}$ is linearly independent, then the PDG structure of $f$ satisfies strong transversality at $\bar{x}$.
\end{ex}
\begin{proof}
Given any point $\bar{x}$ fixed, we make the following choices to show that $f$ is a PDG-structured function at $\bar{x}$ \cite{vutheory,mifflin2003primal}. Using the notation of Definition \ref{df:pdg}, we set $$m_2=0,\quad m_1+1=|A(\bar{x})|.$$Set $\Delta=\Delta_1$, and choose $\delta$ small enough that $B_\delta(\bar{x})$ excludes the set of functions $\{f_i:i\not\in A(\bar{x})\}$ from the local structure. We have that $f_i(\bar{x})=f(\bar{x})$ for all $i\in\{0,1,\ldots,m_1\}$ by the definition of active set, and there are no $\varphi_j$ functions since $m_2=0$. Thus, point (i) of Definition \ref{df:pdg} is satisfied. Points (ii), (iii) and (iv) are also immediately satisfied, since $m_2=0$. For point (v)(a), since $\delta$ is small enough to exclude the inactive functions, we have that for each $x\in\M\cap B_\delta(\bar{x})$ there exists $i\in\{0,\ldots,m_1\}$ such that $f(x)=f_i(x)$. For point (v)(b), since $f$ is a finite-max function, the subgradients of $f$ at $\bar{x}$ have the form $$g=\sum\limits_{i=0}^{m_1}\alpha_i\nabla f_i(\bar{x})\qquad\mbox{ \cite[Exercise 8.31]{rockwets}}.$$ Therefore, $f$ has PDG structure. Finally, if $\left\{\nabla f_i(\bar{x})-\nabla f_0(\bar{x}):i\in A(\bar{x})\setminus\{0\}\right\}$ is a linearly independent set, then the matrix of \eqref{eq:st} is $[\{\nabla f_i(\bar{x})-\nabla f_0(\bar{x})\}_{i=1}^{m_1}]$ and has full column rank \cite[\S 4.1]{mifflin2003primal}. Therefore, strong transversality is satisfied.\end{proof}
\noindent Since the finite-max example has no $\varphi$-functions, the dual set $\Delta$ coincides with the canonical simplex. Our next example deals with a more convoluted dual set.
\begin{ex}[Maximum eigenvalue] Let $A(\cdot)$ be an $m\times m$ symmetric matrix function whose elements are $\mathcal{C}^2$ functions on $\R^n$. Define $f:\R^n\to\overline{\R}$,$$f(x)=\max\limits_{t\in\mathcal{T}}\{F(x,t)\},$$
where $F(x,t)=t^\top A(x)t\mbox{ and }\mathcal{T}=\{t\in\R^m:t^\top t=1\}.$ It is known that $f(x)$ is the maximum eigenvalue of $A(x)$. Suppose that $A$ is such that $f$ is convex on $\R^n$. Then $f$ has PDG structure at any point $\bar{x}\in\R^n$.\end{ex}
\begin{proof}
The proof of this example is much more involved than that of the previous one, so we give an overview here and refer the reader to \cite[\S 3.2]{vutheory} for the details. Using the Frobenius inner product $\langle P,Q\rangle=\trace(PQ)$ on the space $\mathcal{S}$ of $s\times s$ symmetric matrices, we suppose that $f(\bar{x})$ has multiplicity $s$ and that the first eigenspace $\mathcal{E}^1(\bar{x})$ has basis matrix $$E^1(\bar{x})=[e_1(\bar{x})~~e_2(\bar{x})~~\cdots~~e_s(\bar{x})].$$Then by \cite[Theorem 3]{overton1992large}, the subgradients of $f$ at $\bar{x}$ have the form
$$g=(g_1,\ldots,g_n)^\top\in\partial f(\bar{x})\Leftrightarrow g_j=\left\langle S,E^1(\bar{x})^\top\frac{\partial A(\bar{x})}{\partial x_j}E^1(\bar{x})\right\rangle,j=1,\ldots,n,$$
where $S\in\Delta.$ For $\Delta$, we use the set of $s\times s$ dual feasible matrices:
$$\Delta=\{S\in\mathcal{S}:S\mbox{ is positive semidefinite and }\trace(S)=1\}.$$ This choice is shown in \cite[\S 3.2]{vutheory} to satisfy point (iii) of Definition \ref{df:pdg}. To define the $f_i$ and $\varphi_j$ functions, we define
$$I_1=\{1,2,\ldots,s\}\qquad\mbox{ and }\qquad I_2=\{(k,l)\in I_1\times I_1:k<l\}.$$Then $|I_1|=s,$ $|I_2|=s(s-1)/2$ and $|I_1|+|I_2|=s(s+1)/2$. We have continuity of eigenvalues of $A$, so there exists $\varepsilon>0$ such that for each $x\in B_\varepsilon(\bar{x})$, the multiplicity of $f(x)$ is at most $s$. By \cite[pp. 557--559]{wilkinson1965algebraic}, there exist $s$ $\mathcal{C}^2$ functions $q_i:B_\varepsilon(\bar{x})\to\R^m$, $i\in I_1$, that satisfy
\begin{align*}
q_i(\bar{x})^\top A(\bar{x})q_i(\bar{x})&=f(\bar{x}),&&\mbox{ for }i\in I_1,\\
q_k(\bar{x})^\top A(\bar{x})q_l(\bar{x})&=0,&&\mbox{ for }(k,l)\in I_2,\mbox{ and}\\
q_k(x)^\top q_l(x)&=\delta_{kl},&&\mbox{ for }(k,l)\in I_2, \forall x\in B_\varepsilon(\bar{x}),
\end{align*}
where $\delta_{ii}=1$ and $\delta_{kl}=0$ for $k\neq l$. Then $\{q_i(\bar{x})\}_{i\in I_1}$ is an orthonormal basis of eigenvectors for $\mathcal{E}^1(\bar{x}).$ We define
$$\phi_{kl}(x)=q_k(x)^\top A(x)q_l(x)$$ and set
$$\mathcal{M}=\{x\in B_\varepsilon(\bar{x}):\phi_{kl}(x)=0~\forall(k,l)\in I_2\}.$$Then for any $x\in\mathcal{M}$, $i\in I_1$  and $(k,l)\in I_2$, we have
\begin{align*}
\phi_{kl}(x)\delta_{kl}&=q_k(x)^\top A(x)q_l(x),\\
A(x)q_i(x)&=\phi_{ii}(x)q_i(x),\\
f(x)&=\max\limits_{j\in I_1}\phi_{jj}(x).
\end{align*}
Therefore, setting
\begin{align*}
m_1&=s-1,&f_{i-1}&=\phi_{ii}\mbox{ for }i\in I_1,\\
m_2&=s(s-1)/2,&\varphi_j&=\phi_{kl}\mbox{ for }(k,l)\in I_2,
\end{align*}we have that points (i), (ii) and (iv) of Definition \ref{df:pdg} are satisfied \cite[\S 3.2]{vutheory}. Only point (v) remains, for which we express $\partial f(x)$ in terms of $\partial\phi_{kl}$. Using \cite[Lemma 3.3]{vutheory}, we find that every $g\in\partial f(\bar{x})$ is a linear combination of the set$$\left\{\nabla\phi_{kl}(x)=\left(\frac{\partial\phi_{kl}(x)}{\partial x_1},\ldots,\frac{\partial\phi_{kl}(x)}{\partial x_n}\right)^\top\right\}_{(k,l)\in I_1(x)\times I_1(x)},$$where the multipliers in the linear combination form a matrix $S\in\mathcal{S}$. Denoting by $s_{ip}$ the element of row $i$, column $p$ of $S$, the choice of $\alpha_i$ and $\beta_j$ that satisfies point (v) is
\begin{equation*}\alpha_{i-1}=\begin{cases}s_{ii},&\mbox{ if }i\in I_1(x),\\0,&\mbox{ if }i\in I_1\setminus I_1(x),\end{cases}\mbox{ and }\beta_j=\begin{cases}2s_{kl},&\mbox{ if }(k,l)\in I_2(x),\\0,&\mbox{ if }(k,l)\in I_2\setminus I_2(x).\end{cases}\qedhere\end{equation*}\end{proof}
Our final example gives a good illustration of the interest of considering PDG structures with non-null $\varphi$-functions, therefore yielding dual sets $\Delta$ different from the canonical simplex $\Delta_1$.
\begin{ex}[$\ell_1$-regularization]\label{ex:ell1reg}
Let $f:\R^n\to\overline{\R}$ be $\mathcal{C}^2$, $\|x\|_1 = \displaystyle\sum_{i=1}^n |x_i|$ and $\tau>0$.  Consider the $\ell_1$-regularization problem:
$$\min_{x \in \R^n} \{ f(x) + \tau\|x\|_1 \}.$$\end{ex}
\noindent The minimand $f(x) + \tau\|x\|_1$ can be written as the maximum of a finite number of smooth functions, so the approach of Example \ref{ex:finitemax} could be applied.  However, to account for the sign change in each component of $x$, the required number of subfunctions is $2^{n}$, which is clearly undesirable. In order to acquire a more
succinct PDG structure for the $\ell_1$-regularization problem, we begin by thinking about the equivalent problem:
$$\min_{r,x\in \R^n} \{ f(x) + \tau\|r\|_1 : r=x\}.$$
While this rewriting doubles the number of variables, it also allows for a PDG structure without resorting to the finite-max framework.  We note that this reformulation is used in several algorithms, such as applying ADMM to the LASSO problem \cite{Boydetal2011, eckstein1992douglas}. Let $F(r,x) = f(x) + \tau\|r\|_1$ and fix a point $(\bar{r},\bar{x})$ with $\bar{r}=\bar{x}$. Then the active set is$$A(\bar{r},\bar{x}) = \{ i : \bar{r}_i = 0\},$$and the desired manifold is 
\begin{align*}
\M&=\{(r,x):r=x, A(r,x)=A(\bar{r},\bar{x})\}\\
&=\{(r,x):r=x, r_i=0\Leftrightarrow\bar{r}_i=0\}.
\end{align*}
\noindent Let $m_1=0$ and define 
$$f_0(r,x) = f(x) + \tau\sum_{i=1}^n\sgn(\bar{r}_i) r_i, ~~\mbox{where}~\sgn(\bar{r}_i) = \left\{\begin{array}{rl} -1, & \mbox{if }\bar{r}_i < 0, \\ 0, & \mbox{if }\bar{r}_i=0, \\ 1, & \mbox{if }\bar{r}_i > 0.\end{array}\right.$$ 
Let $m_2=|A(\bar{r},\bar{x})|+1$ and define 
\begin{align*}
\varphi_i(r,x)&=r_i, \quad i \in A(\bar{r},\bar{x}),\\
\varphi_{|A(\bar{r},\bar{x})|+1}(r,x)&=\|r-x\|^2.
\end{align*}
Finally, define
$$\Delta = \{(\alpha, \beta) \in \R \times \R^{m_2} : \alpha=1, \beta = [-1,1]^{m_2} \}.$$
\noindent We show that the above provides the PDG structure for $F$ at $(\bar{r},\bar{x})$ relative to $\M$. However, the PDG structure is not strongly transversal. Conditions (i), (ii), (iii), (iv), and (v)(a) of Definition \ref{df:pdg} are trivially true; we have only to prove condition (v)(b).  Considering $(r,x) \in M$, we find that 
$$\begin{array}{rcl}
\partial F(r,x) &=& \partial\left(f(x) + \|r\|_1\right)
= \left[\begin{array}{c} \gamma\tau \\ \nabla f(x)\end{array}\right],
\end{array}\mbox{ where }\gamma_i = \begin{cases}
\sgn(r_i), & \mbox{if }r_i \neq 0,\\
\ [-1,1], &  \mbox{if }r_i = 0.
\end{cases}$$
Conversely, for $(r,x) \in\M$, the set of $g$ defined by (v)(b) is
$$\begin{array}{rcl}
&& \left\{ g = \alpha\nabla f_0(r,x) + \sum_{j=1}^{m_2} \beta_j \nabla\varphi_j(r,x) : (\alpha,\beta) \in \Delta, \beta_j = 0 ~\mbox{if}~ \varphi_j(x) \neq 0\right\},\\
&=&\left\{ g = \left[\begin{array}{c}\tau\sgn(r) \\ \nabla f(x) \end{array}\right] 
+ \left[\begin{array}{c}\hat{\beta} \\ 0 \end{array}\right]  : 
\hat{\beta}_i = 0 ~\mbox{if}~ \varphi_j(x) \neq 0,  \hat{\beta}_i = [-1, 1] ~\mbox{if}~ \varphi_j(x) = 0 
\right\},\\
&=&\left[\begin{array}{c} \gamma\tau \\ \nabla f(x)\end{array}\right].
\end{array}$$Thus, condition (v)(b) holds and the PDG structure is proved. Now, considering $\OV$ as defined in Definition \ref{strongtrans}, we have that
$$\begin{array}{rcl}
\overline{V}&=&\left[\{\nabla f_i(\bar{r}, \bar{x})-\nabla f_0(\bar{r}, \bar{x})\}_{i=1}^{m_1}\cup\{\nabla\varphi_j(\bar{r}, \bar{x})\}_{j=1}^{m_2}\right],\\
&=&\left[\{\emptyset\} \cup \{\nabla\varphi_j(\bar{r}, \bar{x})\}_{j=1}^{m_2}\right],\\
&=&\left[ \left\{\left[\begin{array}{c} e_{i}\\0\end{array}\right]\right\}_{i \in A(\bar{r},\bar{x})} \cup \left[\begin{array}{c}2(\bar{r}-\bar{x}) \\ 2(\bar{x}-\bar{r})\end{array}\right]\right],
\end{array}$$
where $e_{i}$ is the $i$\textsuperscript{th} canonical vector.  Noting that $\bar{x}=\bar{r}$, we conclude that the PDG structure is not strongly transversal. Notwithstanding, this particular PDG structure will be useful for exhibiting a fast track for the $\ell_1$-regularized functions and, hence, applying our new chain rule; see Example~\ref{ex:ell1fasttrack}.

\subsection{$\VU$-structure}

The principle behind $\VU$-decomposition is that a nonsmooth, lsc function owes its nonsmoothness to a subspace only (the $\V$-space) and behaves smoothly on the remaining orthogonal subspace (the $\U$-space). The direct sum of these two subspaces is $\R^n$. We denote by $\OV\in\R^{n\times v}$ a basis matrix for the $\V$-space and by $\OU\in\R^{n\times u}$ a semiorthonormal (definition follows) basis matrix for the $\U$-space.
\begin{df}[Semiorthonormal]\label{df:semi}A matrix $A\in\R^{r\times c}$ is \emph{semiorthonormal} if either $c\geq r$ and the rows are orthonormal vectors (equivalently $AA^\top=\Id_r$), or $r>c$ and the columns are orthonormal vectors (equivalently $A^\top A=\Id_c$). In the case of a square matrix ($c=r$), semiorthonormality is equivalent to orthonormality.\end{df}
\begin{df}[Restriction] Given $\OV$ and $\OU$, the \emph{restriction} of any $x\in\R^n$ to the $\V$-space is defined by$$x_{\V}=\left(\OV^\top\OV\right)^{-1}\OV^\top x.$$Similarly, the restriction of $x$ to the $\U$-space is defined by$$x_{\U}=\OU^\top x.$$\end{df}
\noindent Note that $x_{\V}\in\R^{\dim\V}$ and $x_{\U}\in\R^{\dim\U}$. The inverse of $\OU^\top\OU$ is not needed in the definition of $x_{\U}$, because $\OU^\top\OU=\Id$ by Definition \ref{df:semi}.
In \cite{vusmoothness}, $x_{\V}$ and $x_{\U}$ are referred to as the \emph{projections} of $x\in\R^n$ onto the $\V$-space and $\U$-space, respectively. However, the projection as defined in this article is the orthogonal projection of $x$ onto a set and yields another vector in $\R^n,$ whereas $x_{\V}\in\R^v$ and $x_{\U}\in\R^u$. Therefore, we refer to $x_{\V}$ and $x_{\U}$ as restrictions rather than projections. In fact, $\Proj_{\V}x=\OV x_{\V}$ and $\Proj_{\U}x=\OU x_{\U}$, so one may view the orthogonal projection of $x$ onto the $\V$-space as the \emph{orthogonal lifting} of $x_{\V}$ into $\R^n$, and similarly for the $\U$-space and $x_{\U}$.\par Every $x\in\R^n$ is uniquely expressible in terms of its restrictions $x_{\V}$ and $x_{\U}$ \cite{vusmoothness}. Specifically,
$$x=\Proj_{\V}x+\Proj_{\U}x=\OV x_{\V}+\OU x_{\U}=\OV([\OV^\top\OV]^{-1}\OV^\top x)+\OU(\OU^\top x).$$
The separation of $\R^n$ into the $\V$-space and $\U$-space depends on the point of interest $\bar{x}\in\R^n$ and is achieved as follows.
\begin{df}[$\VU$-decomposition] \label{df:UV-decom}
Let $f:\R^n\to\overline{\R}$ be an lsc function and $\bar{x} \in \dom f$ with $\partial f(\bar{x})\neq\emptyset.$ Let $\bar{g} \in\ri\bar{\partial} f(\bar{x})$. The \emph{$\VU$-decomposition} of $\R^n$ for $f$ at $\bar{x}$ is defined by the subspaces
$$\V(\bar{x})=\mathrm{span}(\partial f(\bar{x})-\bar{g})\qquad\mbox{ \emph{and} }\qquad\U(\bar{x}) = N_{\partial f(\bar{x})}(\bar{g}).$$
Note that since $\bar g\in\ri\partial f(\bar{x})$, the normal cone defining $\U(\bar x)$ is a subspace.
\end{df}
\noindent Henceforth, the dependence of the subspaces on $f$ and $\bar x$ is omitted, unless needed for clarity.
The $\VU$-decomposition is independent of the choice of $\bar{g}\in\ri\partial f(\bar{x})$ \cite{vusmoothness}. The $\U$-restriction of $\bar{g}$ is the same as that of any other subgradient of $f$ at $\bar{x}$:
\begin{equation}\label{eq:w}
\bar{g}_{\U}=\OU^\top\bar{g}=\OU^\top g\quad\mbox{ for any }g\in\bar{\partial}f(\bar{x}).
\end{equation}
For a function $f$ that has PDG structure, the $\V$-space at $\bar{x}\in\dom f$ can be expressed in terms of the primal function gradients \cite{vusmoothness}:
\begin{align}
\V&=\lin\left[\{\nabla f_i(\bar{x})-\nabla f_0(\bar{x})\}_{i=0}^{m_1},\{\nabla\varphi_j(\bar{x})\}_{j=1}^{m_2}\right].\label{eq:vspace}
\end{align}
If, in addition, $f$ satisfies strong transversality, then the matrix $\overline{V}$ defined in \eqref{eq:st} is a basis matrix for $\V$ (if strong transversality does not hold, a subset
of the index sets \(\{0,1,\ldots,m_1\}\) and \(\{1,\ldots,m_2\}\) defines a basis matrix for
$\V$; see the comments after Theorem \ref{thm:fast track}).
\begin{df}[$\U$-Lagrangian] \label{df:U-Lag}
Let $f:\R^n\to\overline{\R}$ be a PDG function at $\bar{x} \in \dom f$ relative to the $d$-dimensional manifold $\M\subseteq \R^n$. Let $\bar{g} \in \ri\partial f(\bar{x})$.  The {\em $\U$-Lagrangian} of $f$ at $\bar{x}$ is defined by
$$L_{\U}f(u;\bar{g}) = \min_{v \in \V} \{f(\bar{x} + (\OU u+\OV v)) -\bar{g}^\top\OV v\},$$
where $\U$ and $\V$ are the $\VU$-decomposition subspaces and $\dim\V=n-d$. The related solution mapping is denoted 
$$W_{\U}f(u;\bar{g}) = \arg\min_{v \in \V} \{f(\bar{x} + (\OU u+\OV v)) -\bar{g}^\top\OV v\}.$$
\end{df}
\noindent A fundamental benefit of $\VU$-decomposition is that the gradient of the $\U$-Lagrangian of the objective function exists at the origin, even though the gradient of the objective function itself may not. This allows the application of gradient-based methods to the $\U$-Lagrangian.  Under favourable conditions, the $\U$-Lagrangian may even have a second-order expansion at the origin, which allows for quasi-Newton methods to be applied to the $\U$-Lagrangian. Since the $\U$-Lagrangian is a re-parameterization of $f$ along the $\U$-subspace, an algorithm designed to drive both $u$ and $\bar g$ to zero along iterations converges quickly, thus justifying the name ``fast'' track (see item (iv) in Theorem~\ref{thm:fast track}).
\begin{df}[Fast track] \label{df:fasttrack}
Let $f:\R^n\to\overline{\R}$ be a PDG function at $\bar{x} \in \dom f$ relative to the $d$-dimensional manifold $\M\subseteq\R^n$.  Let $\U$ and $\V$ be the $\VU$-decomposition subspaces, with basis matrices $\overline{U}$ and $\overline{V}$ and $\dim\V=n-d$.   Suppose that $\chi(u) = \bar{x} + (\overline{U} u + \overline{V} v(u))$, 
where $v: \U \mapsto \V$. The function $\chi$ is a fast track of $f$ at $\bar{x}$ for $\M$ if for any $\bar{g} \in \ri\partial f(\bar{x})$,
\begin{itemize}
\item[\rm(i)] $v$ is a $\mathcal{C}^2$ selection of $W_{\U}f(u;\bar{g})$ (i.e., $v(u) \in W_{\U}f(u;\bar{g})$, $v \in \mathcal{C}^2$) and
\item[\rm(ii)] $L_{\U}f(u;\bar{g})$ is $\mathcal{C}^2$ in $u$.
\end{itemize}
\end{df}
\noindent The fast track is nothing but a special re-parameterization of certain $\V$-components in terms of the respective $\U$-component. For PDG structured functions satisfying strong transversality, the result below gives a constructive expression for the fast track, based on the gradients of the primal functions.
\begin{thm}\emph{\cite[Theorem 3.1]{vusmoothness}}\label{thm:fast track}
Let $f:\R^n\to\overline{\R}$ be a PDG function that satisfies strong transversality at $\bar{x}$ relative to the $d$-dimensional manifold $\M\subseteq \R^n,$ and suppose that $$\dim\V=n-d\geq1,\dim\U\geq1.$$ Then for all $u$ small enough, the following hold.
\begin{itemize}
\item[\rm(i)] The nonlinear system with variable $v$ and parameter $u,$
\begin{align*}
f_i(\bar{x}+\overline{U}u+\overline{V}v)-f_0(\bar{x}+\overline{U}u+\overline{V}v)=0,&~i=1,\ldots,m_1,\\
\varphi_j(\bar{x}+\overline{U}u+\overline{V}v)=0,&~j=1,\ldots,m_2,\end{align*}
has a unique solution $v=v(u)$  such that
$v:\R^{\dim\U}\rightarrow\R^{\dim\V}$ is a $\mathcal{C}^2$ function
 satisfying $v(0)=0$. 
\item[\rm(ii)] The trajectory $\chi(u)=\bar{x}+\overline{U}u+\overline{V}v(u)$ has a $\mathcal{C}^1$ Jacobian:
$$\nabla\chi(u)=\overline{U}+\nabla v(u)=\overline{U}-\overline{V}(V(u)^\top\overline{V})^{-1}V(u)^\top\overline{U},$$
where
$$V(u)=\left[\left\{\nabla f_i(\chi(u))-\nabla f_0(\chi(u))\right\}_{i=1}^{m_1},\left\{\nabla\varphi_j(\chi(u))\right\}_{j=1}^{m_2}\right].$$
\item[\rm(iii)] In particular, $v(0)=0,\chi(0)=\bar{x},V(0)=\overline{V},\nabla v(0)=0,$ and $\nabla \chi(0)=\overline{U}.$
\item[\rm(iv)] The trajectory $\chi(u)$ is tangent to $\U$ at $\chi(0)=\bar{x},$ with $v(u)=O(\|u\|^2).$
\item[\rm(v)] The function $f(\chi(u))=f_i(\chi(u))$ for $i=0,1,\ldots,m_1,$ and $\chi(u)\in\M.$
\item[\rm(vi)] The matrix $V(u)\in\R^{n\times\dim\V}$ is a basis for $\V(u),$ and the matrix $\nabla \chi(u)\in\R^{n\times\dim\U}$, $$\nabla \chi(u)=\overline{U}+\overline{V}\nabla v(u),$$ is a basis for $\U(u).$
\end{itemize}
\end{thm}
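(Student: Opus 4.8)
The plan is to derive every item from the Implicit Function Theorem. Let $G$ be the $\mathcal{C}^2$ map, defined on a neighbourhood of $(0,0)$ in $\R^{\dim\U}\times\R^{m_1+m_2}$, whose components are $(f_i-f_0)\bigl(\bar x+\overline{U}u+\overline{V}v\bigr)$ for $i=1,\dots,m_1$ and $\varphi_j\bigl(\bar x+\overline{U}u+\overline{V}v\bigr)$ for $j=1,\dots,m_2$; note that $\overline{V}$ has $m_1+m_2$ columns and, by strong transversality, full column rank, so $\dim\V=m_1+m_2$ and the target dimension of $G$ equals the dimension of its $v$-argument. First I would check that $G(0,0)=0$ by Definition~\ref{df:pdg}(i) (all $f_i$ are active at $\bar x$ and $\varphi_j(\bar x)=0$) and that the partial Jacobian $\partial_v G(0,0)=\overline{V}^\top\overline{V}$ is invertible, precisely because strong transversality gives $\overline{V}$ full column rank. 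The IFT then produces a unique $\mathcal{C}^2$ map $u\mapsto v(u)$ with $v(0)=0$ solving $G(u,v(u))\equiv 0$, which is (i). (If $\dim\V=0$ or $\dim\U=0$ the statement is vacuous, so both may be taken positive as assumed.)

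For (ii) and (iii) I would differentiate $G(u,v(u))\equiv 0$. With $V(u)$ as in the statement, the chain rule gives $\partial_u G(u,v(u))=V(u)^\top\overline{U}$ and $\partial_v G(u,v(u))=V(u)^\top\overline{V}$, hence
\[
\nabla v(u)=-\bigl(V(u)^\top\overline{V}\bigr)^{-1}V(u)^\top\overline{U},
\]
valid for $u$ small: indeed $V(0)=\overline{V}$ (immediate from $v(0)=0$, $\chi(0)=\bar x$), so $V(0)^\top\overline{V}=\overline{V}^\top\overline{V}$ is invertible and invertibility persists on a neighbourhood. Substituting into $\nabla\chi(u)=\overline{U}+\overline{V}\nabla v(u)$ gives the displayed Jacobian formula together with its $\mathcal{C}^1$ regularity (inherited from $V\in\mathcal{C}^1$). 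Evaluating at $u=0$: $\nabla v(0)=-(\overline{V}^\top\overline{V})^{-1}\overline{V}^\top\overline{U}=0$ since $\overline{V}^\top\overline{U}=0$ ($\V\perp\U$), whence $\nabla\chi(0)=\overline{U}$; this, with $v(0)=0$, $\chi(0)=\bar x$, $V(0)=\overline{V}$, is (iii). For (iv), $v\in\mathcal{C}^2$ with $v(0)=0$ and $\nabla v(0)=0$ forces $v(u)=O(\|u\|^2)$ by Taylor's theorem, and $\ran\nabla\chi(0)=\ran\overline{U}=\U$ gives tangency of $\chi$ to $\U$ at $\bar x$.

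For (v), the equalities $f_i(\chi(u))=f_0(\chi(u))$ ($i=1,\dots,m_1$) and $\varphi_j(\chi(u))=0$ are exactly the relation $G(u,v(u))=0$, and then Definition~\ref{df:pdg}(v)(a) gives $f(\chi(u))=f_i(\chi(u))$ for some index, hence for every index since the $f_i$ agree along $\chi$. I expect the genuinely delicate point to be establishing $\chi(u)\in\M$, which requires reconciling the abstract manifold $\M$ of the PDG hypothesis with the zero set cut out by the primal data. Here I would argue that, under strong transversality, the map $x\mapsto\bigl(\{f_i(x)-f_0(x)\}_i,\{\varphi_j(x)\}_j\bigr)$ has surjective Jacobian $\overline{V}^\top$ at $\bar x$, so its zero set near $\bar x$ is a $\mathcal{C}^2$ manifold of dimension $n-(m_1+m_2)=\dim\U$ that $\chi$ parameterises; by Definition~\ref{df:pdg}(i) a relatively open, full-dimensional piece of this zero set lies inside the $\dim\U$-dimensional manifold $\M$, forcing the two to coincide locally, so $\chi(u)\in\M$.

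Finally, for (vi): $V(u)$ has full column rank for $u$ small by continuity from $V(0)=\overline{V}$, and applying Definition~\ref{df:pdg}(v)(b) at $\chi(u)\in\M$ — where, all $f_i$ being active and all $\varphi_j$ vanishing, no multiplier is constrained — together with the reasoning behind \eqref{eq:vspace} yields $\V(\chi(u))=\lin\bigl[\{\nabla f_i(\chi(u))-\nabla f_0(\chi(u))\}_i,\{\nabla\varphi_j(\chi(u))\}_j\bigr]=\ran V(u)$, so $V(u)$ is a basis matrix for $\V(\chi(u))$. A direct computation with the formula for $\nabla v(u)$ gives $\nabla\chi(u)^\top V(u)=\overline{U}^\top V(u)-\overline{U}^\top V(u)=0$, so $\ran\nabla\chi(u)\perp\V(\chi(u))$; since $\nabla\chi(u)$ also has full column rank $\dim\U$ and $\dim\U+\dim\V=n$, its range is the orthogonal complement $\U(\chi(u))$, making $\nabla\chi(u)$ a basis matrix for $\U(\chi(u))$. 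Everything outside the $\chi(u)\in\M$ step is IFT bookkeeping and continuity of rank and invertibility.
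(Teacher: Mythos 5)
This theorem is imported verbatim from \cite[Theorem 3.1]{vusmoothness}; the paper offers no proof of its own, only the citation and the remark afterwards that ``strong transversality is used to apply a second-order implicit function theorem.'' Your reconstruction is exactly that intended route, and the bulk of it is sound: $\partial_vG(0,0)=\overline{V}^\top\overline{V}$ is invertible by strong transversality, the IFT yields the $\mathcal{C}^2$ solution $v(u)$, implicit differentiation gives $\nabla v(u)=-(V(u)^\top\overline{V})^{-1}V(u)^\top\overline{U}$, and $\overline{V}^\top\overline{U}=0$ kills $\nabla v(0)$; items (ii)--(iv) and the orthogonality computation $\nabla\chi(u)^\top V(u)=0$ in (vi) all check out. (You also silently correct the typo $\nabla\chi=\overline U+\nabla v$ in the statement to $\overline U+\overline V\nabla v$, which is right.)

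The one genuine gap is the step you yourself flag as delicate: $\chi(u)\in\M$. Your argument reads Definition \ref{df:pdg}(i) as supplying ``a relatively open, full-dimensional piece'' of the zero set $Z=\{x:f_i(x)=f_0(x),\,\varphi_j(x)=0\}$ inside $\M$, but condition (i) as stated only asserts that the active set $S=\{x:f_i(x)=f(x)\ \forall i,\ \varphi_j(x)=0\}$ contains $\bar x$ and is contained in $\M\cap B_\delta(\bar x)$; it says nothing about $S$ being relatively open in $Z$ or of full dimension, and two $d$-dimensional manifolds through $\bar x$ can meet in a single point. Note also that $S$ is cut out by $f_i=f$, not by $f_i=f_0$, so membership of $\chi(u)$ in $S$ already presupposes $f(\chi(u))=f_i(\chi(u))$, while your route to that identity invokes Definition \ref{df:pdg}(v)(a), which is itself only guaranteed for $x\in\M\cap B_\delta(\bar x)$ --- a circularity. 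Closing this requires either a stronger reading of the PDG definition (in the source \cite{vusmoothness} the manifold is essentially identified with the locus where the structure functions remain active, which is what licenses $Z=\M$ locally) or a separate continuity argument showing the maximal $f_i$ stays equal to $f$ along $\chi$. Everything downstream of that step, including $\V(u)=\ran V(u)$ via \eqref{eq:vspace} applied at $\chi(u)$, inherits the same dependency. Since the paper itself does not prove the theorem, there is nothing to compare against beyond noting that your IFT skeleton is the standard one and that this single step is where the real content of the cited proof lives.
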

\noindent In Theorem \ref{thm:fast track}, strong transversality is used to apply a second-order implicit function theorem and give a constructive expression $v=v(u)$. Strong transversality simplifies the presentation, but it is not a necessary condition for the existence of fast tracks. It is shown in \cite{mifflin2003primal} that for a PDG structured function to admit a fast track, it is sufficient to select a suitable subset of primal functions, eliminating redundant information. Specifically, consider $K=K_f\cup K_\varphi$ in the primal gradient index set, with
\(0\in K_f\subset\{0,1,\ldots,m_1\}\) and \(K_\varphi\subset\{1,\ldots,m_2\},\)
and suppose that 
\begin{itemize}
\item[(i)] the subspace $\V$ in \eqref{eq:vspace} is spanned by the reduced subset of indices
\[
\V=\V_K:=\lin\left[\{\nabla f_i(\bar{x})-\nabla f_0(\bar{x})\}_{i\in K_f},\{\nabla\varphi_j(\bar{x})\}_{j\in K_\varphi}\right]\,,\mbox{ and}
\]
\item[(ii)] the set of primal gradients above is linearly independent.
\end{itemize}
Then all the statements in Theorem~\ref{thm:fast track} hold, replacing \(\{0,1,\ldots,m_1\}\) by $K_f$, \(\{1,\ldots,m_2\}\) by $K_\varphi$ and $v=v(u)$ by $v_K=v_K(u)$
(see \cite[Theorem 4.2]{mifflin2003primal}). A particular case is the PDG structure presented in Example \ref{ex:ell1reg} for 
the $\ell_1$-regularization function, whose primal index set does not satisfy strong transversality.  However, the considered structure does admit a fast track at every point, a fact we will illustrate in Example \ref{ex:ell1fasttrack} once we have established our
chain rule and derived a formula for the sum of functions in 
Theorem~\ref{ex:sum}.\par The gradient of the $\U$-Lagrangian is closely related to the \emph{$\U$-gradient} of $f$, an object defined in Section \ref{sec:ugradient} that is the main focus of this paper. We explore properties of the $\U$-gradient and present the mathematical tools required for calculating the $\U$-gradient and $\U$-space of compositions of well-behaved functions. By ``well-behaved'', we mean that $f = h \circ \Phi$ where $h$ is PDG and has a fast track, and $\Phi$ is transversal to the fast track. We develop a chain rule, which allows for the computation of the $\U$-gradient of $f$ based on the analytic components of $h$ and $\Phi$.  In order to proceed, we need a slight divergence into {\em partly smooth} functions.

\section{Partial Smoothness}\label{sec:partialsmoothness}

PDG functions with fast tracks are closely related to partly smooth functions \cite{Hare06}. There are many useful properties of partly smooth functions found in \cite{Lewis02}, which we use to draw our conclusions about PDG functions in Sections \ref{sec:ugradient} and \ref{sec:examples}. In this section, we showcase the relationship and discuss the role that transversality (not to be confused with strong transversality) and nondegeneracy have to play in such results.
\begin{df}[partial smoothness] \label{df:PS}
 A function $f:\R^n\to\overline{\R}$ is {\em partly smooth} at a point $\bar{x}$ relative to a set $\M\ni\bar x$ if $\M \subseteq \R^n$ is an
$(n-\dim\V)$-dimensional manifold about $\bar{x}$ and
\begin{enumerate}
  \item[\rm(i)]  \emph{(smoothness)} $f$ restricted to $\M$ is a $\mathcal{C}^2$ function near $\bar x$;
  \item[\rm(ii)]  \emph{(Clarke regularity)} $f$ is Clarke regular at all points $x \in \M$ near $\bar x$, with $\partial f(x) \ne \emptyset$;
  \item[\rm(iii)]  \emph{(sharpness)} the affine span of $\partial f(\bar{x})$ (which is convex due to \emph{(ii)}) is a translate of $N_{\M} (\bar{x})$;
  \item[\rm(iv)]  \emph{(subcontinuity)} $\partial f$ restricted to $\M$ is continuous at $\bar{x}$.
\end{enumerate}
In this case, we refer to $\M$ as the {\em active manifold of partial smoothness}.
\end{df}
\noindent By the sharpness condition in (iii) above, the $\U$-subspace in Definition \ref{df:UV-decom} is the subspace tangent to $\M$ at $\bar x$. The relation between the manifold and a fast track is stated in the next theorem.
\begin{thm} \label{thm:FT=PS}
Let $f:\R^n\to\overline{\R}$ be a PDG function at $\bar{x}\in\dom f$ relative to $(n-\dim\V)$-dimensional manifold $\M\subseteq\R^n$. The active manifold of the partly smooth function and the fast track of $\VU$-theory have a one-to-one correspondence as follows.
\begin{enumerate}
\item[\rm(i)] If $f$ is partly smooth at $\bar{x}$ relative to $\M$, then $\M$ defines a fast track $$\chi(u)=\bar{x}+(u+v(u))$$ for $f$ at $\bar{x}$. In this case, $\M$ is locally expressible in the form  $\M = \{\bar{x} + (u + v(u)) : u \in \U\}$.
\item[\rm(ii)]  If $\chi(u)=\bar{x}+(u+v(u))$ is a fast track for $f$ at $\bar{x}$, then $f$ is partly smooth at $\bar{x}$ relative to $\M= \{\bar{x} + (u+v(u)) : u \in \U \}$.
\end{enumerate}
\end{thm}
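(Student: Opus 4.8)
The plan is to prove the two implications separately, the common thread being the identity $T_{\M}(\bar{x})=\U$ (equivalently $N_{\M}(\bar{x})=\V$). In direction (i) this is read off from the sharpness axiom: for $\bar g\in\ri\partial f(\bar x)$, sharpness gives $\aff\partial f(\bar x)=\bar g+N_{\M}(\bar x)$, while Definition~\ref{df:UV-decom} gives $\aff\partial f(\bar x)=\bar g+\V$ (because $\V=\lin(\partial f(\bar x)-\bar g)$ and $0\in\partial f(\bar x)-\bar g$, so the affine hull equals the span) together with $\U=N_{\partial f(\bar x)}(\bar g)=\V^{\perp}$; hence $N_{\M}(\bar x)=\V$ and, $\M$ being a manifold, $T_{\M}(\bar x)=\U$. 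In direction (ii) the same identity comes out of the construction instead, as $T_{\M}(\bar x)=\ran\nabla\chi(0)=\U$ using $v(0)=0$ and $\nabla v(0)=0$. Convexity of $\partial f(\bar x)$, implicitly used above, is immediate from Definition~\ref{df:pdg}(v)(b), which presents $\partial f(\bar x)$ as the image of a convex set under a linear map.

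\noindent\textbf{Part (i): partial smoothness $\Rightarrow$ fast track.}
Given $f$ partly smooth relative to the $(n-\dim\V)$-manifold $\M$, the identity $T_{\M}(\bar x)=\U$ lets me use the standard local graph representation of a $\mathcal{C}^2$-manifold over its tangent space: near $\bar x$, $\M=\{\bar x+(u+v(u)):u\in\U,\ \|u\|<\varepsilon\}$ for a unique $\mathcal{C}^2$ map $v:\U\to\V$ with $v(0)=0$ and $\nabla v(0)=0$. Put $\chi(u)=\bar x+(u+v(u))$; then $v$ is already $\mathcal{C}^2$, and the remaining requirements of Definition~\ref{df:fasttrack} are that $v(u)\in W_{\U}f(u;\bar g)$ and that $L_{\U}f(\cdot;\bar g)$ is $\mathcal{C}^2$, the latter following from the former via $L_{\U}f(u;\bar g)=f(\chi(u))-\bar g^{\top}v(u)$ and the smoothness axiom ($f\circ\chi\in\mathcal{C}^2$). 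To get $v(u)\in W_{\U}f(u;\bar g)$, I would first establish the first-order condition: partial smoothness propagates along $\M$ \cite{Lewis02}, so $\aff\partial f(\chi(u))=g^{*}(u)+N_{\M}(\chi(u))$; as $u\to0$, $N_{\M}(\chi(u))\to N_{\M}(\bar x)=\V$ and $\partial f(\chi(u))\to\partial f(\bar x)$ (subcontinuity), so the coset $\bar g+\U$ --- transversal to $N_{\M}(\chi(u))$ for small $u$ --- meets $\aff\partial f(\chi(u))$ in a single point $g(u)\to\bar g$, which lies in $\partial f(\chi(u))$ for small $u$ by stability of relative interiors (here $\bar g\in\ri\partial f(\bar x)$ is used). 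Since $g(u)-\bar g\in\U=\V^{\perp}$ and $f$ is Clarke regular at $\chi(u)$, for $w\in\V$ near $v(u)$ one obtains $f(\bar x+u+w)-\bar g^{\top}w\ \ge\ f(\chi(u))-\bar g^{\top}v(u)+o(\|w-v(u)\|)$, so $v(u)$ is stationary for the $\V$-parameterized function defining $W_{\U}f(u;\bar g)$. Upgrading this to a genuine local minimizer, hence $v(u)\in W_{\U}f(u;\bar g)$, and the resulting $\mathcal{C}^2$-regularity of $L_{\U}f(\cdot;\bar g)$ and of the selection $v$, I would close using the known smoothness and singleton behaviour of $W_{\U}f$ and $L_{\U}f$ for partly smooth functions from \cite{Lewis02,Hare06}.

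\noindent\textbf{Part (ii): fast track $\Rightarrow$ partial smoothness.}
Given a fast track $\chi(u)=\bar x+(u+v(u))$ with $v\in\mathcal{C}^2$, $v(0)=0$, set $\M=\{\chi(u):u\in\U,\ \|u\|\text{ small}\}$ and verify the axioms of Definition~\ref{df:PS}. That $\M$ is an $(n-\dim\V)$-dimensional $\mathcal{C}^2$-manifold about $\bar x$ holds because $\chi$ is $\mathcal{C}^2$ with injective Jacobian $d\mapsto d+\nabla v(u)d$ (if $d+\nabla v(u)d=0$ with $d\in\U$, $\nabla v(u)d\in\V$, then $d=0$ as $\U\cap\V=\{0\}$), hence a $\mathcal{C}^2$-embedding with $\dim\M=\dim\U=n-\dim\V$; smoothness of $f|_{\M}$ holds since $f(\chi(u))=L_{\U}f(u;\bar g)+\bar g^{\top}v(u)$ is $\mathcal{C}^2$ by Definition~\ref{df:fasttrack}. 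The other axioms I would read off the PDG subgradient formula of Definition~\ref{df:pdg}(v)(b) evaluated along the fast track, where the active index structure is locally constant --- by Theorem~\ref{thm:fast track}(v) and its reduced-index version, $f(\chi(u))=f_i(\chi(u))$ and $\varphi_j(\chi(u))=0$ for the relevant indices --- so $\partial f(\chi(u))$ is the image of a fixed closed convex set $\Delta$ under the continuous matrix $[\{\nabla f_i(\chi(u))\}_i,\{\nabla\varphi_j(\chi(u))\}_j]$: this gives nonemptiness of $\partial f$ on $\M$ (canonical vectors lie in $\Delta$), continuity of $\partial f|_{\M}$ at $\bar x$ (subcontinuity), and, comparing its affine hull with \eqref{eq:vspace} and the hinge identity, sharpness $\aff\partial f(\bar x)=\bar g+N_{\M}(\bar x)$; Clarke regularity at points of $\M$ is read from the same local representation of $f$.

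\noindent\textbf{Main obstacle.}
The crux is the closing step of Part (i): first-order stationarity of $v(u)$ comes cleanly from sharpness, subcontinuity, Clarke regularity and $\bar g\in\ri\partial f(\bar x)$, but promoting it to a true minimizer of the $\V$-parameterized function --- and thence to the $\mathcal{C}^2$-regularity of $L_{\U}f(\cdot;\bar g)$ and of $v$ --- needs a second-order / prox-regularity-type ingredient beyond bare partial smoothness. A secondary technicality, in Part (ii), is justifying that along a fast track (with strong transversality not assumed) the active index structure is locally constant, as this is exactly what makes the PDG subgradient formula yield the continuity and Clarke regularity of $\partial f$ on $\M$.
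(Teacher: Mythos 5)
Your route is genuinely different from the paper's: the paper proves Theorem \ref{thm:FT=PS} in one line, by observing that the proof of \cite[Theorem 3.1]{Hare06} (stated there for $f$ convex and $\bar x$ a minimizer) never actually uses either hypothesis and so transfers verbatim. You instead attempt a direct verification of both implications. Your structural skeleton is sound --- the identification $N_{\M}(\bar x)=\V$ and $T_{\M}(\bar x)=\U$, obtained from sharpness in one direction and from $\nabla\chi(0)$ having range $\U$ in the other, is exactly the right pivot, and Part (ii)'s verification of the manifold structure and of the smoothness of $f|_{\M}$ via $f(\chi(u))=L_{\U}f(u;\bar g)+\bar g^{\top}\OV v(u)$ is correct.

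However, the gap you flag at the end of Part (i) is genuine, and it is the whole content of that direction. Definition \ref{df:U-Lag} defines $W_{\U}f(u;\bar g)$ as the set of minimizers over all of $\V$ of $v\mapsto f(\bar x+\OU u+\OV v)-\bar g^{\top}\OV v$, and Theorem \ref{thm:FT=PS} assumes neither convexity of $f$ nor that $\bar x$ minimizes $f$. Your argument delivers first-order stationarity of $v(u)$ (from $g(u)-\bar g\in\V^{\perp}$ together with Clarke regularity), but for a nonconvex Clarke regular function stationarity does not imply membership in the argmin, and partial smoothness alone does not close this: one needs convexity, a prox-regularity or quadratic-growth ingredient, or a localized redefinition of $W_{\U}f$. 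This is precisely the step that \cite{Hare06} and \cite{mifflin2002proximal} handle with the extra hypotheses, and the step the present paper disposes of by asserting, without reproving, that the argument of \cite[Theorem 3.1]{Hare06} survives their removal. A secondary soft spot sits in Part (ii): you invoke Theorem \ref{thm:fast track}(v) to get local constancy of the active index structure along $\chi$, but that theorem assumes strong transversality (or the reduced-index linear-independence conditions stated after it), neither of which is a hypothesis of Theorem \ref{thm:FT=PS}; the sharpness, subcontinuity and Clarke-regularity axioms therefore still need an argument that works from the bare definition of a fast track.
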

\begin{proof} Theorem 3.1 of \cite{Hare06} provides the same statements under the conditions that $f$ is convex and $\bar{x}$ is a minimizer of $f$.  However, the proof of \cite[Thm 3.1]{Hare06} does not use either of these conditions and is directly applicable here.  (The proof held these conditions since in \cite{Hare06}, which was based on \cite{mifflin2002proximal}, fast tracks were only defined for convex functions at a minimizer.) \end{proof}
The chain rule for partly smooth functions requires the following definition of transversal functions.
\begin{df}[transversality]\label{df:trans} Let $\Phi:\R^m \rightarrow \R^n$ be a $\mathcal{C}^2$ function, $\M\subseteq\R^m$ be a manifold and $\bar{x} \in \dom\Phi\cap \M$.  We say $\Phi$ is {\em transversal} to $\M$ at $\bar{x}$ if
$$\{ z \in N_{\M}(\bar{x}) :  \nabla \Phi(\bar{x})^\top  z = 0 \} = \{0\}.$$
\end{df}
\noindent Equivalently, $\Phi$ is {\em transversal} to $\M$ at $\bar{x}$ if
$$\ran(\nabla\Phi(\bar{x})) + T_{\M}(\bar{x}) = \R^n.$$
\begin{thm}\emph{\cite[Theorem 4.2]{Lewis02}\label{thm:PSChain}} Let $\Phi : \R^m \rightarrow \R^n$ be $\mathcal{C}^2$ and $\bar{x} \in \dom\Phi$.  Suppose that $h:\R^n\to\overline{\R}$ is partly smooth at $\Phi(\bar{x})$ relative to the manifold $\M\subseteq\R^n$ and $\Phi$ is transversal to $\M$ at $\bar{x}$.  Then $h \circ \Phi$ is partly smooth at $\bar{x}$ relative to the manifold $\Phi^{-1}(\M)\subseteq\R^m$.
\end{thm}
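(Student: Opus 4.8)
The plan is to verify, one at a time, the four conditions of Definition~\ref{df:PS} for $f:=h\circ\Phi$ at $\bar x$ relative to $\widehat\M:=\Phi^{-1}(\M)$, writing $\bar y:=\Phi(\bar x)$. The first task is to see that $\widehat\M$ is actually a manifold about $\bar x$. Represent $\M$ locally as $G^{-1}(0)$ for a $\mathcal{C}^2$ map $G$ whose Jacobian $\nabla G(\bar y)$ is surjective, so that $T_{\M}(\bar y)=\ker\nabla G(\bar y)$; using transversality in the form $\ran\nabla\Phi(\bar x)+T_{\M}(\bar y)=\R^n$ and applying $\nabla G(\bar y)$, one finds that $\nabla(G\circ\Phi)(\bar x)=\nabla G(\bar y)\nabla\Phi(\bar x)$ is surjective. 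Hence near $\bar x$ the set $\widehat\M=(G\circ\Phi)^{-1}(0)$ is a manifold with $T_{\widehat\M}(\bar x)=\nabla\Phi(\bar x)^{-1}\bigl(T_{\M}(\bar y)\bigr)$. Because transversality is an open condition (rank is lower semicontinuous, and $\nabla\Phi$ and the cones $T_{\M}$, $N_{\M}$ vary continuously along $\M$), $\Phi$ remains transversal to $\M$ at every $x\in\widehat\M$ near $\bar x$, so the same description is valid throughout a neighbourhood.

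The second task is the subdifferential chain rule. The constraint qualification needed for the equality $\partial(h\circ\Phi)(x)=\nabla\Phi(x)^\top\partial h(\Phi(x))$ is exactly that any $z\in\partial^\infty h(\Phi(x))$ with $\nabla\Phi(x)^\top z=0$ must vanish, and this is precisely what transversality delivers: since $h$ is Clarke regular at $\bar y$ with $\partial h(\bar y)\neq\emptyset$, its horizon subdifferential satisfies $\partial^\infty h(\bar y)\subseteq N_{\M}(\bar y)$ (the recession directions of $\partial h(\bar y)$ lie in $\aff\partial h(\bar y)$, which by sharpness of $h$ is a translate of $N_{\M}(\bar y)$), so $\nabla\Phi(\bar x)^\top z=0$ with $z\in\partial^\infty h(\bar y)$ forces $z=0$. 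Invoking the chain rule for a Clarke regular function composed with a $\mathcal{C}^2$ map, $f$ is then Clarke regular at $\bar x$ --- and, by the first task, at all nearby $x\in\widehat\M$ --- with $\partial f(x)=\nabla\Phi(x)^\top\partial h(\Phi(x))\neq\emptyset$. That is condition~(ii).

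It then remains to check (i), (iii) and (iv). For smoothness~(i): since $\Phi(\widehat\M)\subseteq\M$, the restriction $\Phi|_{\widehat\M}\colon\widehat\M\to\M$ is $\mathcal{C}^2$, and $f|_{\widehat\M}=(h|_{\M})\circ(\Phi|_{\widehat\M})$ is a composition of $\mathcal{C}^2$ maps near $\bar x$. For sharpness~(iii): from $\partial f(\bar x)=\nabla\Phi(\bar x)^\top\partial h(\bar y)$ and $\aff\partial h(\bar y)=\bar g+N_{\M}(\bar y)$ we get $\aff\partial f(\bar x)=\nabla\Phi(\bar x)^\top\bar g+\nabla\Phi(\bar x)^\top N_{\M}(\bar y)$, and the elementary identity $(A^{-1}L)^\perp=A^\top(L^\perp)$, applied with $A=\nabla\Phi(\bar x)$ and $L=T_{\M}(\bar y)$, combined with the first task, gives $\nabla\Phi(\bar x)^\top N_{\M}(\bar y)=\bigl(T_{\widehat\M}(\bar x)\bigr)^\perp=N_{\widehat\M}(\bar x)$; hence $\aff\partial f(\bar x)$ is a translate of $N_{\widehat\M}(\bar x)$, and in particular $\dim\V$ for $f$ at $\bar x$ equals $\dim N_{\widehat\M}(\bar x)$, so $\widehat\M$ has exactly the dimension $m-\dim\V$ required in Definition~\ref{df:PS}. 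For subcontinuity~(iv): as $x\to\bar x$ along $\widehat\M$ we have $\Phi(x)\to\bar y$ along $\M$, so $\partial h(\Phi(x))\to\partial h(\bar y)$ by subcontinuity of $\partial h$ for $h$, while $\nabla\Phi(x)\to\nabla\Phi(\bar x)$; feeding these into $\partial f(x)=\nabla\Phi(x)^\top\partial h(\Phi(x))$ (valid near $\bar x$) gives $\partial f(x)\to\partial f(\bar x)$.

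The step I expect to be the main obstacle is the second one: identifying the constraint qualification for the subdifferential chain rule with the transversality hypothesis. This hinges on knowing that $\partial^\infty h(\bar y)$ is governed by the recession cone of $\partial h(\bar y)$ --- a property of Clarke regular functions with nonempty subdifferential, traceable through Fact~\ref{fact1} --- and then invoking the sharpness of $h$ to situate that cone inside $N_{\M}(\bar y)$. A secondary technicality is the set-valued continuity used in~(iv) when $\partial h(\bar y)$ is unbounded (as for indicator functions of manifolds), where one should argue with the outer and inner limits of $\nabla\Phi(x)^\top\partial h(\Phi(x))$ separately rather than via a Hausdorff-distance estimate.
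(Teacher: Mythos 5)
The paper offers no proof of this statement: it is quoted directly from \cite[Theorem 4.2]{Lewis02}, so there is no in-paper argument to compare yours against. Your reconstruction is correct and is essentially Lewis's original argument: transversality makes $\nabla (G\circ\Phi)(\bar{x})$ surjective for a local defining map $G$ of $\M$, so $\Phi^{-1}(\M)$ is a $\mathcal{C}^2$-manifold with $T_{\Phi^{-1}(\M)}(\bar{x})=\nabla\Phi(\bar{x})^{-1}T_{\M}(\Phi(\bar{x}))$; the inclusion $\partial^\infty h(\Phi(\bar{x}))\subseteq N_{\M}(\Phi(\bar{x}))$ converts transversality into the constraint qualification of \cite[Theorem 10.6]{rockwets}, giving Clarke regularity of $h\circ\Phi$ and the exact chain rule for subdifferentials on a neighbourhood of $\bar{x}$ in $\Phi^{-1}(\M)$; and conditions (i), (iii), (iv) then follow by composition, by the identity $\nabla\Phi(\bar{x})^\top N_{\M}(\Phi(\bar{x}))=N_{\Phi^{-1}(\M)}(\bar{x})$, and by set-valued continuity. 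It is worth noting that your key step --- $\partial^\infty h\subseteq N_{\M}$, obtained from the recession cone of the Clarke-regular subdifferential together with sharpness --- is precisely what the paper later establishes as Lemma \ref{lem:horizon} (by a slightly different route, via the indication function $h_{\M}$ and the epigraph inclusion) and exploits in Corollary \ref{cor:subdiffchain}; your argument is therefore consistent with the machinery the authors build around this cited theorem.

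The one visible seam is in subcontinuity (iv) when $\partial h(\Phi(\bar{x}))$ is unbounded. The outer-limit half of the continuity argument requires ruling out sequences $g_k\in\partial h(\Phi(x_k))$ with $\|g_k\|\to\infty$ while $\nabla\Phi(x_k)^\top g_k$ converges; normalizing and passing to a subsequence produces a unit vector $d\in\partial^\infty h(\Phi(\bar{x}))$ with $\nabla\Phi(\bar{x})^\top d=0$, which the nondegeneracy condition you already established forbids. You flag this technicality without closing it, but since it closes with exactly the constraint qualification from your second step, this is a presentational gap rather than a mathematical one.
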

\noindent Translating to the language of fast tracks and $\VU$-decompositions, we have the following theorem.
\begin{thm}\label{thm:chainfasttrack} Let $\Phi : \R^m \rightarrow \R^n$ be $\mathcal{C}^2$ and $\bar{x} \in \dom\Phi$. Let $\U$ and $\V$ be the $\VU$-decomposition of $h:\R^n\to\overline{\R}$ at $\Phi(\bar{x})$. Suppose $v(u)$ is a fast track of $h$ at $\Phi(\bar{x})$, and $\Phi$ is transversal to the manifold $\M = \{\Phi(\bar{x}) + (u + v(u)) ~:~ u \in \U\}$. Then $\Phi^{-1}(\M)$ is a fast track for $h \circ \Phi$ at $\bar{x}.$
\end{thm}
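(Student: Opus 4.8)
The plan is to route the whole argument through partial smoothness, using the two-way dictionary of Theorem~\ref{thm:FT=PS} together with Lewis's chain rule, Theorem~\ref{thm:PSChain}. Since $v(u)$ is a fast track of $h$ at $\Phi(\bar{x})$ for $\M$, in particular $h$ is PDG at $\Phi(\bar{x})$, $\M$ is locally $\{\Phi(\bar{x})+(u+v(u)):u\in\U\}$, and $\Phi(\bar{x})\in\M$, so $\bar{x}\in\Phi^{-1}(\M)$. By Theorem~\ref{thm:FT=PS}(ii), $h$ is then partly smooth at $\Phi(\bar{x})$ relative to $\M$. Invoking the standing hypothesis that $\Phi$ is transversal to $\M$ at $\bar{x}$, Theorem~\ref{thm:PSChain} yields that $h\circ\Phi$ is partly smooth at $\bar{x}$ relative to the manifold $\Phi^{-1}(\M)\subseteq\R^m$. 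Transversality also reconciles the dimensions: $\M$ has codimension $\dim\V$ in $\R^n$; since $\nabla\Phi(\bar{x})^\top$ is injective on $N_{\M}(\Phi(\bar{x}))$ by Definition~\ref{df:trans}, the set $\Phi^{-1}(\M)$ has codimension $\dim\V$ in $\R^m$; and the $\V$-space of $h\circ\Phi$ at $\bar{x}$, which by the sharpness property is a translate of $N_{\Phi^{-1}(\M)}(\bar{x})=\nabla\Phi(\bar{x})^\top N_{\M}(\Phi(\bar{x}))$, again has dimension $\dim\V$, so that $\Phi^{-1}(\M)$ has the correct dimension $m-\dim\V$ for the composite.

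To finish, I would feed $h\circ\Phi$ back into Theorem~\ref{thm:FT=PS}(i): partial smoothness of $h\circ\Phi$ at $\bar{x}$ relative to $\Phi^{-1}(\M)$ produces a fast track $\chi(u)=\bar{x}+(u+\hat{v}(u))$ of $h\circ\Phi$ at $\bar{x}$ whose active manifold is precisely $\Phi^{-1}(\M)$, locally expressible as $\{\bar{x}+(u+\hat{v}(u)):u\in T_{\Phi^{-1}(\M)}(\bar{x})\}$, where $T_{\Phi^{-1}(\M)}(\bar{x})$ is the $\U$-subspace of $h\circ\Phi$ at $\bar{x}$ by sharpness. By the one-to-one correspondence in Theorem~\ref{thm:FT=PS}, this is exactly the assertion that $\Phi^{-1}(\M)$ is a fast track for $h\circ\Phi$ at $\bar{x}$.

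The one non-formal step, and what I expect to be the main obstacle, is that Theorem~\ref{thm:FT=PS}(i) is stated for PDG functions, so before invoking it one must check that $h\circ\Phi$ itself has PDG structure at $\bar{x}$ relative to $\Phi^{-1}(\M)$. I would obtain this by pulling back the PDG data of $h$: if $\{h_i\}_{i=0}^{m_1}$, $\{\varphi_j\}_{j=1}^{m_2}$ and $\Delta$ realise the PDG structure of $h$ at $\Phi(\bar{x})$, then $\{h_i\circ\Phi\}$ and $\{\varphi_j\circ\Phi\}$ are $\mathcal{C}^2$, they agree with $h\circ\Phi$ (resp.\ vanish) along $\Phi^{-1}(\M)$, one adjoins extra $\mathcal{C}^2$ equations cutting $\Phi^{-1}(\M)$ out of $\R^m$ locally (available because transversality makes $\Phi^{-1}(\M)$ a manifold of codimension $\dim\V$), and the same $\Delta$ should serve as the dual set. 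Verifying conditions (i)--(iv) of Definition~\ref{df:pdg} is then bookkeeping; the delicate point is condition (v)(b), which requires the subdifferential chain rule $\partial(h\circ\Phi)(x)=\nabla\Phi(x)^\top\partial h(\Phi(x))$ at points $x$ of $\Phi^{-1}(\M)$ near $\bar{x}$ — transversality supplies exactly the constraint qualification that makes this valid — after which the PDG description of $\partial h(\Phi(x))$ transports along $\nabla\Phi(x)^\top$, with the multipliers carried through unchanged. As a safety net, if formally establishing PDG structure of the composite proves awkward, one can bypass Theorem~\ref{thm:FT=PS}(i) and rerun the argument of \cite[Theorem~3.1]{Hare06} directly on the partly smooth function $h\circ\Phi$, since (as noted after Theorem~\ref{thm:FT=PS}) that argument uses neither convexity nor minimality.
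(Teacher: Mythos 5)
Your proposal follows exactly the paper's route: convert the fast track of $h$ into partial smoothness via Theorem~\ref{thm:FT=PS}(ii), apply Lewis's chain rule (Theorem~\ref{thm:PSChain}) to get partial smoothness of $h\circ\Phi$ relative to $\Phi^{-1}(\M)$, and convert back via Theorem~\ref{thm:FT=PS}(i). The additional care you take over dimensions and over verifying that the composite inherits PDG structure (which Theorem~\ref{thm:FT=PS}(i) formally presupposes) is a legitimate refinement that the paper's own three-line proof elides, but the underlying argument is the same.
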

\begin{proof} Since $v(u)$ is a fast track of $h$ at $\Phi(\bar{x})$, by Theorem \ref{thm:FT=PS}, we know that $h$ is partly smooth at $\Phi(\bar{x})$ relative to $\M$. Applying Theorem \ref{thm:PSChain}, we have that $h \circ \Phi$ is partly smooth at $\bar{x}$ relative to $\Phi^{-1}(\M)$. Using Theorem \ref{thm:FT=PS}, we return to fast tracks and have that $\Phi^{-1}(\M)$ defines a fast track for $h \circ \Phi$ at $\bar{x}$. \end{proof}

\subsection{Transversality and nondegeneracy}

The notion of transversality is found in the theory of partial smoothness \cite{Lewis02,mifflin2003primal}, whereas the notion of nondegeneracy (defined below) is prevalent in $\VU$-theory and other subspace projection frameworks \cite{burke1988identification,calamai1987projected,dunn1987convergence,flaam1992finite,shapiro2003class,hare2007identifying}. These two concepts have a close relationship that we lay out in this section (see Proposition \ref{prop:trans_nondegen}). They are not equivalent in general (see Example \ref{ex:nonnon}); Proposition \ref{transgengood} gives conditions under which equivalence holds.
\begin{df}[Nondegeneracy] Let $\Phi:\R^m\to\R^n$ be $\mathcal{C}^2,$ $\bar{x}\in\dom\Phi,$ $h:\R^n\to\overline{\R}$ be convex. We say that $h\circ\Phi$ is \emph{nondegenerate} at $\bar{x}$ if
$$\{z\in\partial^\infty h(\Phi(\bar{x})):\nabla\Phi(\bar{x})^\top z=0\}=\{0\}.$$
\end{df}
\noindent First, we show that transversality implies nondegeneracy. To do so, we require the following definition and results.
\begin{df}[Indication function] For a function $h:\R^n\to\overline{\R}$ and a set $S\subseteq\R^n$, we define the indication function
$$h_S(x)=\begin{cases}
h(x),&\mbox{\emph{if} }x\in S,\\\infty,&\mbox{\emph{if} }x\not\in S.
\end{cases}$$
\end{df}
\begin{lem}\label{lem:horizon}Suppose $h:\R^n\to\overline{\R}$ is partly smooth at $\bar{z}$ relative to the $(n-\dim\V)$-dimensional manifold $\M\subseteq\R^n$.
Then
	$$\partial^{\infty}h(\bar{z}) \subseteq \partial^{\infty}h_{\M}(\bar{z}) =  N_{\M}(\bar{z}).$$
\end{lem}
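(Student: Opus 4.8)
The plan is to establish the lemma in three pieces: the set inclusion $\partial^\infty h(\bar z)\subseteq\partial^\infty h_{\M}(\bar z)$, the identity $\partial^\infty h_{\M}(\bar z)=N_{\M}(\bar z)$, and to assemble them. The right-hand equality is the conceptually cleanest: since $h$ restricted to $\M$ is $\mathcal{C}^2$ near $\bar z$ (partial smoothness (i)), the function $h_{\M}$ agrees locally with the sum of a smooth function and the indicator $\iota_{\M}$; a smooth perturbation does not change horizon subgradients, so $\partial^\infty h_{\M}(\bar z)=\partial^\infty\iota_{\M}(\bar z)=N_{\M}(\bar z)$, the last step being the standard fact that the horizon subdifferential of an indicator is the normal cone to the set (and $\M$ being a $\mathcal{C}^2$ manifold, this normal cone is the linear subspace $N_{\M}(\bar z)$ orthogonally complementary to $T_{\M}(\bar z)$, as recorded in the Clarke-regularity fact for manifolds quoted earlier).

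For the inclusion $\partial^\infty h(\bar z)\subseteq\partial^\infty h_{\M}(\bar z)$, I would argue via Fact \ref{fact1} and epigraphs. By Fact \ref{fact1}, since $h$ is lsc near $\bar z$ (which follows from Clarke regularity at points of $\M$ near $\bar z$ in Definition \ref{df:PS}(ii)), we have $\partial^\infty h(\bar z)=\{g:(g,0)\in N_{\epi h}(\bar z,h(\bar z))\}$, and likewise for $h_{\M}$. So it suffices to show $N_{\epi h}(\bar z,h(\bar z))\subseteq N_{\epi h_{\M}}(\bar z,h(\bar z))$ in the relevant slice. Because $h_{\M}\ge h$ with equality on $\M$ (in particular at $\bar z$), we have $\epi h_{\M}\subseteq\epi h$, and both epigraphs share the point $(\bar z,h(\bar z))$; the normal cone is antitone under such inclusions when the sets agree near the base point in the appropriate sense — more precisely, any vector normal to the larger set $\epi h$ at a common boundary point is normal to the smaller set $\epi h_{\M}$. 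This gives the desired inclusion of normal cones and hence of horizon subdifferentials. Alternatively, and perhaps more transparently, one can use the definition directly: a horizon subgradient of $h$ at $\bar z$ arises as a limit $\lambda_k g_k\to g$ with $g_k\in\hat\partial h(x_k)$, $x_k\to_h\bar z$, $\lambda_k\searrow 0$; since $\hat\partial h(x)\subseteq\hat\partial h_{\M}(x)$ wherever $x\in\M$ (the constraint only enlarges the regular subdifferential), and since the approximating sequence can be taken within $\M$ using partial smoothness, the same limit witnesses $g\in\partial^\infty h_{\M}(\bar z)$.

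The main obstacle is the claim that the approximating sequence $\{x_k\}$ in the definition of $\partial^\infty h(\bar z)$ may be replaced by one lying in $\M$, together with the matching subgradient comparison $\hat\partial h(x_k)\subseteq\hat\partial h_{\M}(x_k)$ being used in the limit. The subcontinuity and Clarke-regularity hypotheses of partial smoothness are exactly what make this work: Clarke regularity forces $\partial^\infty h(x)$ to be controlled by $N_{\M}(x)$ for $x\in\M$ near $\bar z$ via the sharpness condition, and subcontinuity of $\partial f|_{\M}$ guarantees these normal directions vary continuously, so the limiting horizon direction $g$ lands in $N_{\M}(\bar z)$. I would therefore structure the argument so that the ``hard'' inclusion is obtained by first showing $\partial^\infty h(\bar z)\subseteq N_{\M}(\bar z)$ directly from sharpness plus the structure of horizon subgradients as limits of (scaled) regular subgradients along $\M$, and only then observe $N_{\M}(\bar z)=\partial^\infty h_{\M}(\bar z)$ to close the chain — this avoids any delicate comparison of two epigraphical normal cones and reduces everything to the manifold's normal-cone geometry, which is already available as a Fact in the excerpt.
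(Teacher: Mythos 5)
Your right-hand equality is exactly the paper's argument and is fine: $h_{\M}$ is locally a $\mathcal{C}^2$ function plus $\iota_{\M}$, smooth perturbation leaves the horizon subdifferential unchanged \cite[Theorem 8.9]{rockwets}, and $\partial^\infty\iota_{\M}(\bar z)=\partial\iota_{\M}(\bar z)=N_{\M}(\bar z)$ by Clarke regularity of the manifold. The inclusion, however, has a genuine gap in your primary route. You assert that ``the normal cone is antitone under such inclusions,'' i.e.\ that $\epi h_{\M}\subseteq\epi h$ with a common point forces $N_{\epi h}(\bar z,h(\bar z))\subseteq N_{\epi h_{\M}}(\bar z,h(\bar z))$. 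For the \emph{limiting} normal cone this is false in general: limiting normals to the larger set are limits of regular normals at nearby points of that set, and those points need not belong to the smaller set, so nothing transfers. (Antitonicity does hold for the \emph{regular} normal cone, directly from its definition.) The paper closes exactly this hole by invoking Clarke regularity of $h$ -- Definition \ref{df:PS}(ii) -- to write $N_{\epi h}(\bar z,h(\bar z))=\widehat N_{\epi h}(\bar z,h(\bar z))$, then applying antitonicity of the regular normal cone to get $\widehat N_{\epi h}\subseteq\widehat N_{\epi h_{\M}}$, and finally using the trivial inclusion $\widehat N\subseteq N$ for $\epi h_{\M}$. You cite Clarke regularity only to obtain lower semicontinuity, not at the step where it is actually indispensable.

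Your two alternative sketches do not repair this. The claim that ``the approximating sequence can be taken within $\M$'' in the definition of $\partial^\infty h(\bar z)$ is unjustified: horizon subgradients of $h$ arise from sequences $x_k\to\bar z$ anywhere in $\dom h$, and partial smoothness provides no identification property forcing such sequences onto $\M$; likewise ``limits of scaled regular subgradients along $\M$'' begs the same question. There is a legitimate alternative in the direction you gesture at -- for a Clarke regular function with $\partial h(\bar z)\neq\emptyset$, the horizon subdifferential equals the recession cone of $\partial h(\bar z)$ \cite[Corollary 8.11]{rockwets}, which lies in the subspace parallel to $\aff\partial h(\bar z)$, and sharpness identifies that subspace with $N_{\M}(\bar z)$ -- but you would need to state and use that recession-cone characterization explicitly rather than appeal to subcontinuity of $\partial h|_{\M}$, which plays no role here.
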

\begin{proof} 
Using Clarke regularity of $h$ and the inclusion $\epi h_{\M}\subseteq\epi h$, we see that $$N_{\epi h}(\bar{z},h(\bar{z}))=\widehat N_{\epi h}(\bar{z},h(\bar{z}))\subseteq\widehat N_{\epi h_{\M}}(\bar{z},h(\bar{z}))\subseteq N_{\epi h_{\M}}(\bar{z},h(\bar{z})).$$This and the lower semicontinuity of $h_{\M}$ (see Definition \ref{df:PS}(ii) and Fact \ref{fact1}) give us that$$\partial^\infty h(\bar{z})\subseteq\partial^\infty h_{\M}(\bar{z}).$$Since $h$ is partly smooth, there exists a function $h_0 \in \mathcal{C}^2$ such that $h(x) = h_0(x)$ for all $x \in \M$ and therefore $h_{\M} = h_0 + \iota_{\M}$ (where $\iota_{\M}$ is  the indicator function).  By \cite[Theorem 8.9]{rockwets},
	$$\partial^\infty h_{\M}(\bar{z}) = \partial^\infty \iota_{\M}(\bar{z}).$$
Since $\iota_{\M}$ is Clarke regular, \cite[Exercise 8.14]{rockwets} shows that $\partial^\infty \iota_{\M}(\bar{z})=\partial \iota_{\M}(\bar{z})= N_{\M}(\bar{z})$, which leads to the desired equality.
\end{proof}
\begin{prop}\label{prop:trans_nondegen}
Let $\Phi:\R^m\to\R^n$ be $\mathcal{C}^2,$ $\bar{x}\in\dom\Phi,$ $h:\R^n\to\overline{\R}$ be partly smooth at $\Phi(\bar{x})$ relative to manifold $\M\subseteq\R^m.$ If $\Phi$ is transversal to $\M$ at $\bar{x},$ then $h\circ\Phi$ is nondegenerate at $\bar{x}.$
\end{prop}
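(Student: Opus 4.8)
The plan is to reduce the claimed nondegeneracy to the transversality hypothesis by comparing the two ``kernel'' sets that appear in the respective definitions, using Lemma \ref{lem:horizon} as the bridge. Write $\bar z = \Phi(\bar x)$. Transversality of $\Phi$ to $\M$ at $\bar x$ says $\{z \in N_{\M}(\bar z) : \nabla\Phi(\bar x)^\top z = 0\} = \{0\}$, and nondegeneracy of $h\circ\Phi$ at $\bar x$ is the assertion $\{z \in \partial^\infty h(\bar z) : \nabla\Phi(\bar x)^\top z = 0\} = \{0\}$. So the whole content is to pass from a constraint living in $\partial^\infty h(\bar z)$ to one living in $N_{\M}(\bar z)$.

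First I would invoke Lemma \ref{lem:horizon}: since $h$ is partly smooth at $\bar z$ relative to the $(n-\dim\V)$-dimensional manifold $\M$, we have $\partial^\infty h(\bar z) \subseteq \partial^\infty h_{\M}(\bar z) = N_{\M}(\bar z)$. Then, given any $z$ with $z \in \partial^\infty h(\bar z)$ and $\nabla\Phi(\bar x)^\top z = 0$, this inclusion places $z$ in $N_{\M}(\bar z)$ while retaining $\nabla\Phi(\bar x)^\top z = 0$, so $z = 0$ by transversality. Conversely $0 \in \partial^\infty h(\bar z)$ (the horizon subdifferential is a cone, hence contains the origin) and $\nabla\Phi(\bar x)^\top 0 = 0$, so the displayed set equals $\{0\}$, i.e., $h\circ\Phi$ is nondegenerate at $\bar x$.

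I do not expect a genuine obstacle here once Lemma \ref{lem:horizon} is in hand; the argument is a one-line set inclusion plus the observation that $0$ lies in both sets. The only points worth a remark are bookkeeping ones: that the manifold should be read as $\M\subseteq\R^n$ with $\Phi(\bar x)\in\M$ (so that $\nabla\Phi(\bar x)^\top z$ is well defined for $z\in N_{\M}(\Phi(\bar x))$), and that although the nondegeneracy \emph{definition} was stated for convex $h$, the defining set-equality makes sense verbatim for any lsc (in particular Clarke regular, partly smooth) $h$, so nothing in the proof changes. I would also note that the inclusion $\partial^\infty h(\bar z)\subseteq N_{\M}(\bar z)$ is typically strict, which is exactly why transversality is strictly stronger than nondegeneracy and motivates Example \ref{ex:nonnon} and Proposition \ref{transgengood}.
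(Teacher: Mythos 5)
Your proposal is correct and follows exactly the paper's own argument: invoke Lemma \ref{lem:horizon} to obtain $\partial^\infty h(\Phi(\bar{x}))\subseteq N_{\M}(\Phi(\bar{x}))$, then conclude nondegeneracy directly from the transversality condition. The additional bookkeeping remarks (the manifold living in $\R^n$, the nondegeneracy definition extending verbatim beyond convex $h$) are sensible but not needed beyond what the paper states.
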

\begin{proof}
Since $\Phi$ is transversal to $\M$ at $\bar{x},$ we have
$$\{z\in N_{\M}(\bar{x}):\nabla\Phi(\bar{x})^\top z=0\}=\{0\}.$$Applying Lemma \ref{lem:horizon}, we have that $\partial^\infty h(\Phi(\bar{x}))\subseteq N_{\M}(\bar{x}).$ Thus,
\begin{equation*}
\{z\in\partial^\infty h(\Phi(\bar{x})):\nabla\Phi(\bar{x})^\top z=0\}=\{0\}.\qedhere\end{equation*}
\end{proof}
\noindent The following example shows that the converse of Proposition \ref{prop:trans_nondegen} does not hold in general, that is, nondegeneracy does not imply transversality.
\begin{ex}\label{ex:nonnon}
Define $\Phi:\R^2\to\R^2,$ $\Phi(x,y)=(x^2,y)$ and $h:\R^2\to\overline{\R},$ $h(x,y)=|x|+y^2$. Then $h\circ\Phi$ is nondegenerate at $(0,0)\in\dom\Phi\cap\M,$ but $\Phi$ is not transversal to $\M$ at $(0,0),$ where $\M=\{(0,y)\}$ is the manifold with respect to $h$ at $(0,0)$.\end{ex}
\begin{proof}Note that $h$ is convex, lsc and full-domain. Thus, $\partial^\infty h(x,y)=\{0\}$ for any $(x,y)\in\R^2$ \cite[Theorem 9.13]{rockwets}. Therefore, nondegeneracy holds at $(0,0)$:
$$\{z\in\partial^\infty h(\Phi(0,0)):\nabla\Phi(0,0)^\top z=0\}=\{0\}.$$
To prove nontransversality, we will show that there exists a nonzero $z\in N_{\M}(0,0)$ such that $\nabla\Phi(0,0)^\top z=0.$
Denoting $z$ by $(z_1,z_2),$ we set $\nabla\Phi(x,y)^\top z=0$:
\begin{align*}
\nabla\Phi(x,y)&=\left[\begin{array}{c c}2x&0\\0&1\end{array}\right]=\nabla\Phi(x,y)^\top 
\nabla\Phi(x,y)^\top z&=\left[\begin{array}{c c}2x&0\\0&1\end{array}\right]\left[\begin{array}{c}z_1\\z_2\end{array}\right]=\left[\begin{array}{c}2xz_1\\z_2\end{array}\right]=\left[\begin{array}{c}0\\0\end{array}\right].
\end{align*}
Since $N_{\M}(0,0)=\{(x,0):x\in\R\},$ we have that
$$\{z\in N_{\M}(0,0):\nabla\Phi(0,0)^\top z=0\}=\{(z_1,0):z_1\in\R\}.$$Therefore, transversality does not hold.\end{proof}
\noindent Example \ref{ex:nonnon} proves that nondegeneracy does not imply transversality in general. However, the following proposition provides conditions that transform Proposition \ref{prop:trans_nondegen} into an if-and-only-if statement.
\begin{prop}\label{transgengood}
Let $\Phi:\R^m\to\R^n$ be $\mathcal{C}^2,$ $\bar{x}\in\dom\Phi,$ $h:\R^n\to\overline{\R}$ be partly smooth at $\Phi(\bar{x})$ relative to manifold $\M\subseteq\R^m.$ Then $h_{\M}$ is nondegenerate at $\Phi(\bar{x})$ if and only if $\Phi$ is transversal to $\M$ at $\bar{x}.$
\end{prop}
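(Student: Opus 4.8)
The plan is to observe that, once the definitions are unwound, both halves of the claimed equivalence reduce to the single statement
$$\{z\in N_{\M}(\Phi(\bar{x})):\nabla\Phi(\bar{x})^\top z=0\}=\{0\},$$
so that the proposition follows directly from the horizon-subdifferential equality already recorded in Lemma \ref{lem:horizon}.

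Concretely, I would first apply Lemma \ref{lem:horizon} at the point $\bar{z}=\Phi(\bar{x})$, using that $h$ is partly smooth at $\Phi(\bar{x})$ relative to $\M$; this gives $\partial^{\infty}h_{\M}(\Phi(\bar{x}))=N_{\M}(\Phi(\bar{x}))$. Next, writing out what it means for $h_{\M}$ to be nondegenerate at $\Phi(\bar{x})$ --- namely $\{z\in\partial^{\infty}h_{\M}(\Phi(\bar{x})):\nabla\Phi(\bar{x})^\top z=0\}=\{0\}$ --- and substituting the equality just obtained, I get exactly $\{z\in N_{\M}(\Phi(\bar{x})):\nabla\Phi(\bar{x})^\top z=0\}=\{0\}$, which is Definition \ref{df:trans} of $\Phi$ being transversal to $\M$ at $\bar{x}$. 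Since every step is an equivalence, both implications are obtained at once; note also that this sharpens Proposition \ref{prop:trans_nondegen}, where only the inclusion $\partial^{\infty}h(\Phi(\bar{x}))\subseteq N_{\M}(\Phi(\bar{x}))$ was used, so that only the ``transversality $\Rightarrow$ nondegeneracy'' direction could be concluded for $h$ itself.

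There is essentially no genuine obstacle here: the analytic content --- Clarke regularity of $h$, lower semicontinuity of $h_{\M}$, and the reduction $h_{\M}=h_0+\iota_{\M}$ together with $\partial^{\infty}\iota_{\M}(\Phi(\bar{x}))=N_{\M}(\Phi(\bar{x}))$ --- was already carried out in the proof of Lemma \ref{lem:horizon}. The only minor point worth a remark is that the definition of nondegeneracy as stated is phrased for a convex outer function, whereas $h_{\M}$ carries the indicator of $\M$ and is generally nonconvex; however, that definition refers only to the horizon subdifferential and invokes no convexity, so it applies verbatim to $h_{\M}$, and the argument goes through unchanged.
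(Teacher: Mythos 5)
Your proof is correct and rests on the same key ingredient as the paper's, namely the equality $\partial^{\infty}h_{\M}=N_{\M}$ from Lemma \ref{lem:horizon}. The only difference is cosmetic: the paper proves the two implications separately, routing the ``transversality $\Rightarrow$ nondegeneracy'' direction through Proposition \ref{prop:trans_nondegen} applied to $h_{\M}$ (after citing \cite[Example 3.2]{Lewis02} for partial smoothness of $h_{\M}$), whereas you obtain both directions at once by substituting the equality directly into the definition of nondegeneracy --- a slightly more economical presentation of the same argument.
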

\begin{proof} By \cite[Example 3.2]{Lewis02}, we have that $h_{\M}$ is partly smooth at $\bar{x}$ relative to $\M.$\\
  $(\Leftarrow)$ Applying Proposition \ref{prop:trans_nondegen}, we have that if $\Phi$ is transversal to $\M$ at $\bar{x},$ then $h_{\M}$ is nondegenerate at $\Phi(\bar{x}).$\\
  $(\Rightarrow)$ Suppose $h_{\M}$ is nondegenerate at $\Phi(\bar{x})$ relative to $\M.$ Then
$$\{z\in\partial^\infty h_{\M}(\Phi(\bar{x})):\nabla\Phi(\bar{x})^\top z=0\}=\{0\}.$$ Since $h_{\M}$ is partly smooth on $\M,$ $h_{\M}$ is lsc on $\M=\dom h_{\M}.$ Hence, by Lemma \ref{lem:horizon} we have $\partial^\infty h_{\M} = N_{\M}.$ Therefore,
$$\{z\in N_{\M}(\bar{x}):\nabla\Phi(\bar{x})^\top z=0\}=\{0\},$$which is the definition of transversality.
\end{proof}
\begin{cor}\label{cor:subdiffchain}Let the assumptions and notation of Theorem \ref{thm:chainfasttrack} hold.  Then the following nondegeneracy condition holds:
	$$\{ z  \in \partial^{\infty}h(\Phi(\bar{x})) : \nabla \Phi^\top  z = 0\} = \{0\}.$$
Consequently, for $f = h \circ \Phi$ we have
	\begin{equation}\label{eq:subdifferentialchainrule}\partial f (\bar{x}) = \nabla\Phi(\bar{x})^\top  \partial h(\Phi(\bar{x})).\end{equation}
\end{cor}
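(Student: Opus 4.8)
The plan is to prove the two assertions in turn: first the nondegeneracy (constraint-qualification) statement, obtained by pushing the fast-track hypothesis through the partial-smoothness dictionary already developed, and then the subdifferential formula, obtained by invoking the classical chain rule of variational analysis under that qualification.

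For the first assertion I would reason as follows. Under the hypotheses of Theorem~\ref{thm:chainfasttrack}, $v(u)$ is a fast track of $h$ at $\Phi(\bar x)$ with associated manifold $\M=\{\Phi(\bar x)+(u+v(u)):u\in\U\}$, so Theorem~\ref{thm:FT=PS}(ii) gives that $h$ is partly smooth at $\Phi(\bar x)$ relative to $\M$. Lemma~\ref{lem:horizon} then yields the inclusion $\partial^\infty h(\Phi(\bar x))\subseteq N_{\M}(\Phi(\bar x))$. Since $\Phi$ is assumed transversal to $\M$ at $\bar x$, Definition~\ref{df:trans} provides $\{z\in N_{\M}(\Phi(\bar x)):\nabla\Phi(\bar x)^\top z=0\}=\{0\}$; intersecting the previous inclusion with the set $\{z:\nabla\Phi(\bar x)^\top z=0\}$ immediately gives $\{z\in\partial^\infty h(\Phi(\bar x)):\nabla\Phi(\bar x)^\top z=0\}=\{0\}$. (This is exactly the argument behind Proposition~\ref{prop:trans_nondegen}, so that proposition could equally well just be cited.)

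For the second assertion, note that the displayed condition just established is precisely the constraint qualification required by the subdifferential chain rule of variational analysis \cite[Theorem~10.6]{rockwets}: since $\Phi$ is $\mathcal{C}^2$ (hence $\mathcal{C}^1$) and this qualification holds at $\bar x$, one obtains $\partial f(\bar x)\subseteq\nabla\Phi(\bar x)^\top\partial h(\Phi(\bar x))$ (together with the analogous horizon inclusion). Because $h$ is partly smooth at $\Phi(\bar x)$, it is in particular Clarke regular there with $\partial h(\Phi(\bar x))\neq\emptyset$ (Definition~\ref{df:PS}(ii)); the same theorem then upgrades this inclusion to an equality, so that $\partial f(\bar x)=\nabla\Phi(\bar x)^\top\partial h(\Phi(\bar x))$, which is \eqref{eq:subdifferentialchainrule}.

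I do not anticipate a genuine obstacle: the substance has already been packaged by Theorem~\ref{thm:FT=PS}, Lemma~\ref{lem:horizon}, Proposition~\ref{prop:trans_nondegen} and the classical chain rule, so the proof is essentially an assembly. The only points needing care are bookkeeping ones, namely confirming that the manifold produced by the fast track of $h$ is the same manifold $\M$ appearing in the transversality hypothesis of Theorem~\ref{thm:chainfasttrack}, and checking that the regularity side-condition needed for equality (as opposed to mere inclusion) in the chain rule is in force — which it is, thanks to Clarke regularity of the partly smooth function $h$.
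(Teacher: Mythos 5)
Your proposal is correct and follows essentially the same route as the paper: Theorem \ref{thm:FT=PS} to pass to partial smoothness, Lemma \ref{lem:horizon} plus transversality to establish the nondegeneracy condition (the paper unfolds the argument of Proposition \ref{prop:trans_nondegen} inline rather than citing it, exactly as you note), and then \cite[Theorem 10.6]{rockwets} together with Clarke regularity of the partly smooth $h$ to obtain equality in \eqref{eq:subdifferentialchainrule}. No gaps.
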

\begin{proof} By Theorem \ref{thm:FT=PS}, we know that $h$ is partly smooth at $\Phi(\bar{x})$ relative to $\M$.
That is, $\M$ is the active manifold of partial smoothness.  Since $\Phi$ is transversal to $\M$,
	$$\{ z \in N_{\M}(\bar{x}) :  \nabla \Phi(\bar{x})^\top  z = 0 \} = \{0\}.$$
Applying Lemma \ref{lem:horizon}, $\partial^{\infty}f(\bar{x}) \subseteq N_{\M}(\bar{x})$, so
	$$\{ z  \in \partial^{\infty}h(\Phi(\bar{x})) : \nabla \Phi(\bar x)^\top  z = 0\} = \{0\}.$$
The remainder of the proof now follows immediately from \cite[Theorem 10.6]{rockwets}, noting that $h$ partly smooth implies $h$ is Clarke regular.\end{proof}
\begin{note} If $h$ is convex, then the horizon subdifferential is $\{0\}$ and \eqref{eq:subdifferentialchainrule} always holds.\end{note}

\section{The $\U$-gradient}\label{sec:ugradient}

We have established sufficient background theory to present our main result. Recall that while the gradient of the $\U$-Lagrangian $\nabla L_{\U}$ is the object used in \cite{vusmoothness} and several other papers on $\VU$-theory, it is an object in $\R^u$, which is not always convenient. We prefer to work with the $n$-dimensional analogue, which we call the \emph{$\U$-gradient} of $f$. We remind the reader that the $\U$-Lagrangian (Definition \ref{df:U-Lag}) is independent of the choice of $\bar{g}\in\ri\partial f(\bar{x})$ (see \eqref{eq:w}).

\begin{df}[$\U$-gradient]\label{def:ulagugrad}
Given the gradient of the $\U$-Lagrangian of $f:\R^n\to\overline{\R}$ at $\bar{x}$,
denoted by $\nabla L_{\U} f(\bar x)$,
 and the $\U$-basis matrix $\OU$, the \emph{$\U$-gradient} of $f$ at $\bar{x}$ is the vector $\nabla_{\U}f(\bar{x})$ defined by
\begin{equation}\label{eq:ulagugrad}
\nabla_{\U}f(\bar{x})=\OU\nabla L_{\U}f(\bar{x}).
\end{equation}That is, $\nabla_{\U}f$ is $\nabla L_{\U}f$ orthogonally lifted into $\R^n$.
\end{df}
\begin{lem}\label{lem:ulagugrad}
Given a $\U$-basis matrix $\OU$, the gradient of the $\U$-Lagrangian of $f:\R^n\to\overline{\R}$ at $\bar{x}$ is the restriction of the $\U$-gradient of $f$ at $\bar{x}$ to the $\U$-space:
$$\nabla L_{\U}f=\OU^\top\nabla_{\U}f.$$
\end{lem}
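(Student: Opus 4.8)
The plan is to derive the identity directly from the defining relation \eqref{eq:ulagugrad} together with the semiorthonormality of the $\U$-basis matrix. First I would recall from Definition~\ref{def:ulagugrad} that the $\U$-gradient is the orthogonal lifting of $\nabla L_{\U}f(\bar x)$ into $\R^n$, i.e.\ $\nabla_{\U}f(\bar{x})=\OU\nabla L_{\U}f(\bar{x})$. Left-multiplying this equation by $\OU^\top$ gives
$$\OU^\top\nabla_{\U}f(\bar{x})=\OU^\top\OU\,\nabla L_{\U}f(\bar{x}).$$

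Next I would invoke Definition~\ref{df:semi}: since $\OU\in\R^{n\times u}$ is a semiorthonormal $\U$-basis matrix with $u=\dim\U\le n$, its columns are orthonormal, so $\OU^\top\OU=\Id_u$ (the case $u=n$ being covered by square orthonormality). Substituting this into the displayed equation collapses the right-hand side to $\nabla L_{\U}f(\bar{x})$, which is exactly the claimed identity $\nabla L_{\U}f=\OU^\top\nabla_{\U}f$. It is worth remarking, for context, that this is precisely the statement that the $\U$-restriction map $x\mapsto\OU^\top x$ inverts the orthogonal lifting $y\mapsto\OU y$ on the $\U$-space, in keeping with the discussion following Definition~\ref{df:semi} and the Restriction definition; in particular the identity is independent of the choice of $\bar g\in\ri\partial f(\bar x)$ used to define $\nabla L_{\U}f$, as noted before Definition~\ref{def:ulagugrad}.

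There is essentially no obstacle here: the result is a one-line linear-algebra computation, and the only point requiring any care is bookkeeping on the dimension $u$ (so that $\OU^\top\OU$ is the $u\times u$ identity rather than an $n\times n$ matrix), which is immediate from semiorthonormality.
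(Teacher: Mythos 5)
Your proposal is correct and follows exactly the paper's own argument: premultiply the defining relation $\nabla_{\U}f(\bar{x})=\OU\nabla L_{\U}f(\bar{x})$ by $\OU^\top$ and use $\OU^\top\OU=\Id$ from Definition \ref{df:semi}. The extra remarks on dimensions and independence of $\bar g$ are fine but not needed.
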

\begin{proof}
The statement is proved by premultiplying both sides of \eqref{eq:ulagugrad} by $\OU^\top$ and noting that $\OU^\top\OU=\Id$ by Definition \ref{df:semi}.
\end{proof}
Corollary \ref{cor:subdiffchain} tells us that under the conditions of Theorem \ref{thm:chainfasttrack}, the transversality condition of \cite{Lewis02} is sufficient to ensure that the subdifferential chain rule  holds. This allows us to derive the formula for the $\U$-gradient in this circumstance.
\begin{thm} \label{thm:main} Let $\Phi : \R^m \rightarrow \R^n$ be $\mathcal{C}^2$ and $\bar{x} \in \dom\Phi$. Let $\U$ and $\V$ be the $\VU$-decomposition of $h:\R^n\to\overline{\R}$ at $\Phi(\bar{x})$. Suppose $\chi(u) = \bar{x} + (u + v(u))$ is a fast track of $h$ at $\Phi(\bar{x})$, and $\Phi$ is transversal to the manifold $\M = \{\Phi(\bar{x}) + (u + v(u)) ~:~ u \in \U\}$. Then $f = h \circ \Phi$ has a fast track at $\bar{x}$.  Moreover, the $\U$-space and $\U$-gradient of $f$ at $\bar{x}$ can be computed as follows.
Select any $\bar{g} \in \ri\partial h(\Phi(\bar{x}))$. Then
\begin{equation}\label{eq:Uspace}\U = \{d \in \R^m : d^\top\nabla \Phi(\bar{x})^\top g = d^\top\nabla\Phi(\bar{x})^\top  \bar{g} \: \: \mathrm{for \: all} \: g \in \partial h(\Phi(\bar{x})) \}.\end{equation}
Consequently,
\begin{equation}\label{eq:Ugrad}\nabla_{\U}f(0)=\OU\OU^\top\nabla\Phi(\bar{x})^\top \bar{g},\end{equation}
where $\OU$ is a $\U$-basis matrix for $h$ at $\Phi(\bar{x})$.
\end{thm}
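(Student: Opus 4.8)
The plan is to deduce the three claims in order, each by assembling results already available. That $f=h\circ\Phi$ has a fast track at $\bar{x}$ is immediate: the hypotheses here are precisely those of Theorem~\ref{thm:chainfasttrack}, which gives that $\Phi^{-1}(\M)$ is a fast track for $f$ at $\bar{x}$. In particular, by Definition~\ref{df:fasttrack}(ii) the $\U$-Lagrangian of $f$ is $\mathcal{C}^2$ near the origin, so $\nabla L_{\U}f(0)$---and hence $\nabla_{\U}f(0)$ via Definition~\ref{def:ulagugrad}---is well defined; and by Theorem~\ref{thm:FT=PS}, $f$ is partly smooth at $\bar{x}$ with active manifold $\Phi^{-1}(\M)$, the datum required by Corollary~\ref{cor:subdiffchain}.

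For the $\U$-space formula~\eqref{eq:Uspace}, I would start from Definition~\ref{df:UV-decom} applied to $f$. For any representative $\bar{g}_f\in\ri\partial f(\bar{x})$, the $\U$-space of $f$ equals the normal cone $N_{\partial f(\bar{x})}(\bar{g}_f)$, which, because $\bar{g}_f$ lies in the relative interior of the convex set $\partial f(\bar{x})$, reduces to the subspace
$$\U=\bigl\{d\in\R^m:\ d^\top(g'-\bar{g}_f)=0\ \ \text{for all}\ \ g'\in\partial f(\bar{x})\bigr\}=\bigl(\mathrm{span}(\partial f(\bar{x})-\bar{g}_f)\bigr)^{\perp}.$$
The key input is the subdifferential chain rule: Corollary~\ref{cor:subdiffchain} applies under the present hypotheses and yields $\partial f(\bar{x})=\nabla\Phi(\bar{x})^\top\partial h(\Phi(\bar{x}))$, the image of the convex set $\partial h(\Phi(\bar{x}))$ under the linear map $\nabla\Phi(\bar{x})^\top$. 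Since relative interior commutes with linear images, for the given $\bar{g}\in\ri\partial h(\Phi(\bar{x}))$ the vector $\bar{g}_f:=\nabla\Phi(\bar{x})^\top\bar{g}$ lies in $\ri\partial f(\bar{x})$ and so is an admissible representative. Substituting $g'=\nabla\Phi(\bar{x})^\top g$ and $\bar{g}_f=\nabla\Phi(\bar{x})^\top\bar{g}$ into the display, and using $d^\top\nabla\Phi(\bar{x})^\top(\cdot)=(\nabla\Phi(\bar{x})d)^\top(\cdot)$, gives exactly~\eqref{eq:Uspace}; since the left-hand side is intrinsic to $f$, the identity is independent of the choice of $\bar{g}$.

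For the $\U$-gradient formula~\eqref{eq:Ugrad}, I would use Definition~\ref{def:ulagugrad}, $\nabla_{\U}f(0)=\OU\,\nabla L_{\U}f(0;\bar{g}_f)$ with $\OU$ a $\U$-basis matrix of $f$ at $\bar{x}$, together with the identification $\nabla L_{\U}f(0;\bar{g}_f)=\OU^\top\bar{g}_f$. The latter is the familiar fact that the $\U$-Lagrangian is differentiable at the origin with gradient equal to the $\U$-restriction of any subgradient (\cite{vusmoothness}); it can also be obtained directly by writing the fast track of $f$ from the first step as $\chi_f(u)=\bar{x}+\OU u+\OV v_f(u)$, noting $L_{\U}f(u;\bar{g}_f)=f(\chi_f(u))-\bar{g}_f^\top\OV v_f(u)$ near $0$ (since $v_f$ selects $W_{\U}f$), differentiating at $u=0$, and invoking $\nabla v_f(0)=0$ (the trajectory is tangent to the $\U$-space at $\bar{x}$) together with~\eqref{eq:w}. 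Substituting $\bar{g}_f=\nabla\Phi(\bar{x})^\top\bar{g}$ from the previous paragraph then yields $\nabla_{\U}f(0)=\OU\OU^\top\nabla\Phi(\bar{x})^\top\bar{g}$, and the right-hand side does not depend on the chosen $\bar{g}$, again by~\eqref{eq:w} applied to $f$.

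I expect the one real subtlety to lie in the relative-interior bookkeeping: one must check that $\nabla\Phi(\bar{x})^\top\bar{g}$ lands in the \emph{relative} interior of $\partial f(\bar{x})$---whose topological interior may be empty---and that the normal cone in Definition~\ref{df:UV-decom} collapses to the claimed orthogonal complement exactly because the base point is taken there. The convex-analytic identity $\ri(AC)=A(\ri C)$ for convex $C$ and linear $A$ settles both issues and should be cited explicitly. Everything else is assembly: the fast-track conclusion from Theorem~\ref{thm:chainfasttrack}, the subdifferential chain rule from Corollary~\ref{cor:subdiffchain}, and the standard gradient formula for the $\U$-Lagrangian.
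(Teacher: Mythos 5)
Your proposal is correct and follows essentially the same route as the paper's proof: Theorem~\ref{thm:chainfasttrack} for the fast track, Corollary~\ref{cor:subdiffchain} for the identity $\partial f(\bar{x})=\nabla\Phi(\bar{x})^\top\partial h(\Phi(\bar{x}))$, the commutation of relative interior with linear images to place $\nabla\Phi(\bar{x})^\top\bar{g}$ in $\ri\partial f(\bar{x})$, and the normal-cone characterization of $\U$ from Definition~\ref{df:UV-decom}. You are in fact slightly more thorough than the paper, which leaves the passage from~\eqref{eq:Uspace} to~\eqref{eq:Ugrad} as a ``consequently''; your explicit use of $\nabla L_{\U}f(0;\bar{g}_f)=\OU^\top\bar{g}_f$ together with~\eqref{eq:w} is exactly the intended justification.
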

\begin{proof} Theorem \ref{thm:chainfasttrack} shows that $f$ has a fast track. Next, note that $\bar{g} \in \ri\partial h(\Phi(\bar{x}))$ implies
$$\nabla \Phi(\bar{x})^\top  \bar{g} \in \nabla \Phi(\bar{x})^\top  \ri(\partial h(\Phi(\bar{x}))) = \ri\left(\nabla \Phi(\bar{x})^\top  \partial h(\Phi(\bar{x}))\right) = \ri\partial f (\bar{x}).$$
The formula for $\U$ now follows from the characterization of the $\U$-subspace as the normal cone in Definition \ref{df:UV-decom}:
\[\U=\{d\in\R^n:d^\top g=
d^\top\bar{g}~\forall g\in\partial f(\bar{x})\}\,,\]
whenever \(\bar{g}\in\ri\partial f(\bar{x})\).
\end{proof}
\begin{cor} Let the assumptions and notation of Theorem \ref{thm:main} hold. Select any $\bar{g} \in \ri\partial h(\Phi(\bar{x}))$. Then
$$\nabla L_{\U}f(\bar{x})=\OU^\top\nabla\Phi(\bar{x})^\top \bar{g}.$$
\end{cor}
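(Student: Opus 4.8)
The plan is to read this corollary off directly from Theorem~\ref{thm:main} and Lemma~\ref{lem:ulagugrad}, so the argument will be a short computation rather than anything substantive. First I would invoke Lemma~\ref{lem:ulagugrad}, which, for the $\U$-basis matrix $\OU$ appearing in Theorem~\ref{thm:main}, gives
\[
\nabla L_{\U}f(\bar x)=\OU^\top\nabla_{\U}f(\bar x).
\]
This reduces the corollary to inserting the closed form for $\nabla_{\U}f$ supplied by Theorem~\ref{thm:main}.

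Next I would substitute equation~\eqref{eq:Ugrad}, namely $\nabla_{\U}f(\bar x)=\OU\OU^\top\nabla\Phi(\bar{x})^\top\bar{g}$ (the point $\bar x$ corresponding to the parameter value $u=0$ of the fast track), to obtain
\[
\nabla L_{\U}f(\bar x)=\OU^\top\OU\,\OU^\top\nabla\Phi(\bar{x})^\top\bar{g},
\]
and then collapse $\OU^\top\OU=\Id$ using that $\OU$ is semiorthonormal (Definition~\ref{df:semi}) --- exactly the same simplification used in the proof of Lemma~\ref{lem:ulagugrad}. This yields $\nabla L_{\U}f(\bar x)=\OU^\top\nabla\Phi(\bar{x})^\top\bar{g}$, which is the claim.

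Since Lemma~\ref{lem:ulagugrad}, formula~\eqref{eq:Ugrad}, and semiorthonormality are all already established, there is no genuine obstacle here; the only points requiring care are to use a single fixed $\U$-basis matrix $\OU$ throughout (the one of Theorem~\ref{thm:main}) and, as a consistency check, to note that the right-hand side is independent of the chosen $\bar g\in\ri\partial h(\Phi(\bar x))$: the proof of Theorem~\ref{thm:main} shows $\nabla\Phi(\bar{x})^\top\bar{g}\in\ri\partial f(\bar x)$, and by \eqref{eq:w} the $\U$-restriction $\OU^\top(\nabla\Phi(\bar{x})^\top\bar{g})$ is the same for every subgradient of $f$ at $\bar x$.
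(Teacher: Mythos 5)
Your proof is correct and follows exactly the route the paper takes: the paper's own proof declares the result ``immediate from Theorem \ref{thm:main} and Lemma \ref{lem:ulagugrad}'' and likewise remarks that $\nabla L_{\U}f(\bar{x})$ is independent of the (possibly non-unique) choice of $\bar{g}$. Your explicit computation with $\OU^\top\OU=\Id$ simply spells out what the paper leaves as immediate.
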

\begin{proof}
The proof is immediate from Theorem \ref{thm:main} and Lemma \ref{lem:ulagugrad}. Note that while $\bar{g}$ is not necessarily unique, $\nabla L_{\U}f(\bar{x})$ is \cite[Theorem 4.5]{mifflin2003primal}.
\end{proof}
\noindent We have established the chain rule for PDG fast-track functions. The remainder of this section uses this result to present a separability rule under the same conditions, and Section \ref{sec:examples} provides rules of smooth perturbation and sum of functions.
\begin{lem}[Separability]\label{lem:sep}
Let $f_i:\R^{n_i}\to\overline{\R},$ $f_i=h_i\circ\Phi_i$ satisfy the assumptions of Theorem \ref{thm:main} for each $i\in I=\{1,2,\ldots,k\}$. In particular,
\begin{enumerate}
\item[\rm1.]let $\U_i$ and $\V_i$ be the $\VU$-decomposition of $h_i$ at $\Phi_i(\bar{x}_i)$, and $\OU_i$ be a semiorthonormal basis for $\U_i$;
\item[\rm2.]let $\Phi_i$ be transversal to the manifold $\M_i=\{\bar{x}_i+(u_i+v_i(u_i)):u_i\in\U_i\}$ at $\bar{x}_i$, with $v_i(u_i)$ the fast track of $h_i$ at $\Phi_i(\bar{x}_i)$.
\end{enumerate}
Then the function $f:\R^{n_1}\times\cdots\times\R^{n_k}\to\R$ defined by
$$f(x)=\sum\limits_{i\in I}f_i(x_i)=(h\circ\Phi)(x),$$ where
$h=h_1+\cdots+h_k$ and $\Phi=(\Phi_1,\ldots,\Phi_k),$ satisfies the assumptions of Theorem \ref{thm:chainfasttrack} at $\bar{x}=[\bar{x}_1~~\bar{x}_2~~\cdots~~\bar{x}_k]^\top$, and
$$\nabla_{\U}f(0)=\left(\nabla_{\U_1}f_1(0),\nabla_{\U_2}f_2(0), \ldots,\nabla_{\U_m}f_m(0)\right)^\top,$$
where $\U$ is the diagonal block matrix of $\{\U_i\}_{i\in I}$.
\end{lem}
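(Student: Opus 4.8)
The plan is to verify that the product structure of $f$ makes each hypothesis of Theorem \ref{thm:main} reduce, block by block, to the corresponding hypothesis already assumed for the $f_i$, and then to read off the $\U$-gradient formula from \eqref{eq:Ugrad} using the block-diagonal form of the data. First I would record the elementary observations about the composite objects: since $\Phi=(\Phi_1,\dots,\Phi_k)$ acts coordinatewise on $x=(x_1,\dots,x_k)$, its Jacobian $\nabla\Phi(\bar x)$ is the block-diagonal matrix $\operatorname{Diag}(\nabla\Phi_1(\bar x_1),\dots,\nabla\Phi_k(\bar x_k))$, and since $h=h_1+\cdots+h_k$ is a separable sum, $\partial h(\Phi(\bar x))=\partial h_1(\Phi_1(\bar x_1))\times\cdots\times\partial h_k(\Phi_k(\bar x_k))$ and likewise $\partial^\infty h$ is the product of the $\partial^\infty h_i$ (each $h_i$ being PDG, hence Clarke regular, so the sum rule \cite[Theorem 8.9 / Corollary 10.9]{rockwets} applies with no qualification needed beyond what is already present). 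In particular $\ri\partial h(\Phi(\bar x))=\prod_i\ri\partial h_i(\Phi_i(\bar x_i))$, so a valid choice of $\bar g\in\ri\partial h(\Phi(\bar x))$ is precisely $\bar g=(\bar g_1,\dots,\bar g_k)$ with $\bar g_i\in\ri\partial h_i(\Phi_i(\bar x_i))$.

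Next I would establish that $h$ is PDG at $\Phi(\bar x)$ relative to $\M:=\M_1\times\cdots\times\M_k$ with fast track $v(u)=(v_1(u_1),\dots,v_k(u_k))$, and that $\Phi$ is transversal to $\M$ at $\bar x$. For partial smoothness of $h$ relative to $\M$: conditions (i)--(iv) of Definition \ref{df:PS} are each separable — smoothness of $h_i|_{\M_i}$ gives smoothness of $h|_{\M}$; Clarke regularity and nonemptiness of subdifferentials pass to finite products; the normal cone to a product manifold is the product of the normal cones, so the sharpness condition (iii) for $h$ follows from sharpness of each $h_i$ together with $\operatorname{aff}\partial h(\Phi(\bar x))=\prod_i\operatorname{aff}\partial h_i(\Phi_i(\bar x_i))$; and subcontinuity is coordinatewise. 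Then Theorem \ref{thm:FT=PS}(i) yields that $v(u)$ (the concatenation of the $v_i$) is a fast track of $h$. For transversality: using $N_{\M}(\bar x)=\prod_i N_{\M_i}(\bar x_i)$ and the block-diagonal Jacobian, the condition $\nabla\Phi(\bar x)^\top z=0$ with $z=(z_1,\dots,z_k)\in N_{\M}(\bar x)$ decouples into $\nabla\Phi_i(\bar x_i)^\top z_i=0$ with $z_i\in N_{\M_i}(\bar x_i)$ for each $i$; by hypothesis 2 each forces $z_i=0$, so $z=0$, which is exactly transversality of $\Phi$ to $\M$ at $\bar x$. Hence all hypotheses of Theorem \ref{thm:main} hold for $f=h\circ\Phi$, and in particular Theorem \ref{thm:chainfasttrack} applies and $f$ has a fast track at $\bar x$.

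Finally I would compute the $\U$-gradient. By \eqref{eq:Ugrad}, $\nabla_{\U}f(0)=\OU\OU^\top\nabla\Phi(\bar x)^\top\bar g$, where $\OU$ is a $\U$-basis matrix for $h$ at $\Phi(\bar x)$. Since $\U=\U_1\times\cdots\times\U_k$, we may take $\OU=\operatorname{Diag}(\OU_1,\dots,\OU_k)$, which is semiorthonormal because each $\OU_i$ is. Then $\OU\OU^\top=\operatorname{Diag}(\OU_1\OU_1^\top,\dots,\OU_k\OU_k^\top)$ and $\nabla\Phi(\bar x)^\top\bar g=(\nabla\Phi_1(\bar x_1)^\top\bar g_1,\dots,\nabla\Phi_k(\bar x_k)^\top\bar g_k)$, so block multiplication gives the $i$th block of $\nabla_{\U}f(0)$ equal to $\OU_i\OU_i^\top\nabla\Phi_i(\bar x_i)^\top\bar g_i$, which is exactly $\nabla_{\U_i}f_i(0)$ by \eqref{eq:Ugrad} applied to $f_i$. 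Concatenating yields the claimed formula $\nabla_{\U}f(0)=(\nabla_{\U_1}f_1(0),\dots,\nabla_{\U_k}f_k(0))^\top$ (and the block-diagonal description of $\U$). The only genuinely delicate point is the sharpness condition (iii) in checking partial smoothness of the sum $h$: one must confirm that the affine span of the product subdifferential is the product of the affine spans and that this equals a translate of $N_{\M}(\bar x)=\prod_i N_{\M_i}(\bar x_i)$; this is where the fact that each $h_i$ is PDG (not merely that each admits a fast track) is used, via Theorem \ref{thm:FT=PS} applied to each factor before taking the product. Everything else is a routine block-matrix bookkeeping exercise.
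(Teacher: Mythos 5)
Your proposal is correct and follows essentially the same route as the paper's proof: form the product manifold $\M=\M_1\times\cdots\times\M_k$, use Clarke regularity of each $h_i$ to get the product formula for $\partial h$, verify transversality and partial smoothness of the sum blockwise, and then read off the $\U$-gradient from the block-diagonal forms of $\OU\OU^\top$ and $\nabla\Phi(\bar x)^\top$. You actually supply more detail than the paper does on two points it merely asserts (the coordinatewise check of the partial-smoothness axioms for $h$ and the decoupling of the transversality condition via the block-diagonal Jacobian), but the argument is the same.
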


\begin{proof}
With $u=[u_1~\cdots~u_k]^\top$ and $v=[v_1(u_1)~\cdots~v_k(u_k)]^\top,$ we have that the manifold of $h$ with respect to $\bar{x}$ is $\M=\{\bar{x}+(u+v(u)):u\in\U\}=\M_1\times\cdots\times\M_k$ and $\Phi$ is transversal to $\M$ at $\bar{x}$. Since each $h_i$ is Clarke regular by Theorem \ref{thm:FT=PS} and Definition \ref{df:PS}, we have that
$$\partial h(\bar{x})=\partial h_1(\bar{x}_1)\times\cdots\times\partial h_k(\bar{x}_k),$$
with $h$ Clarke regular by \cite[Proposition 10.5]{rockwets}. Thus, $f$ is partly smooth at $\bar{x}$ and $\M$ defines a fast track for $h$ at $\bar{x}$ by Theorem \ref{thm:FT=PS}(i). Therefore, the assumptions of Theorem \ref{thm:chainfasttrack} are satisfied. With any $\bar{g}_i\in\ri\partial h_i(\Phi_i(\bar{x}_i))$ for each $i\in I$, we have $\bar{g}=[\bar{g}_1~\cdots~\bar{g}_k]^\top\in\ri\partial h(\Phi(\bar{x})).$ Applying Theorem \ref{thm:main}, we have
\begin{align*}
\nabla_{\U}f(0)&=\OU\OU^\top\nabla\Phi(\bar{x})^\top\bar{g}\\
&=\small\left[\begin{array}{c c c c}\OU_1\OU_1^\top&0&\cdots&0\\0&\OU_2\OU_2^\top&\cdots&0\\\vdots&\vdots&\ddots&\vdots\\0&0&\cdots&\OU_k\OU_k^\top\end{array}\right]\left[\begin{array}{c c c c}\nabla\Phi_1(\bar{x}_1)^\top&0&\cdots&0\\0&\nabla\Phi_2(\bar{x}_2)^\top&\cdots&0\\\vdots&\vdots&\ddots&\vdots\\0&0&\cdots&\nabla\Phi_m(\bar{x}_k)^\top\end{array}\right]\left[\begin{array}{c}\bar{g}_1\\\bar{g}_2\\\vdots\\\bar{g}_k\end{array}\right]\\\normalsize
&=\left[\begin{array}{c} \OU_1\OU_1^\top\nabla\Phi_1(\bar{x}_1)^\top\bar{g}_1 \\\OU_2\OU_2^\top\nabla\Phi_2(\bar{x}_2)^\top\bar{g}_2\\
        \vdots \\       \OU_k\OU_k^\top\nabla\Phi_k(\bar{x}_k)^\top\bar{g}_k\end{array}\right]= \left[ \begin{array}{c} \nabla_{\U_1}f_1(0) \\\nabla_{\U_2}f_2(0)\\ \vdots \\ \nabla_{\U_m}f_m(0) \end{array}\right].\qedhere
\end{align*}
\end{proof}

\section{Applications of the chain rule}\label{sec:examples}

In this section, we provide theorems and examples that demonstrate calculus rules (smooth perturbation and sum rules) for the $\U$-gradient. We examine the case of the convex finite-max function, as it is of particular interest in $\VU$-theory.
\begin{thm}[Smooth perturbation]\label{ex:smooth}
Define $f:\R^m\to\overline{\R}$, $f(x)=p(x)+q(x),$ where $p$ is nonsmooth and $q\in\mathcal{C}^2$. Given $\bar{x}\in\dom f,$ let $\OU$ be the $\U$-basis matrix for $p$ at $\bar{x}$. Then $$\nabla_{\U}f(0)=\nabla_{\U}p(\bar{x})+\OU\OU^\top\nabla q(\bar{x})=\nabla_{\U}p(\bar{x})+\OU(\nabla q(\bar{x}))_{\U},$$and the $\U$-space of $f$ at $\bar{x}$ is the $\U$-space of $p$ at $\bar{x}$.
\end{thm}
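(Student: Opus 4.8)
The plan is to present $f$ as a composition $h\circ\Phi$ covered by Theorem~\ref{thm:main} and then to simplify \eqref{eq:Uspace}--\eqref{eq:Ugrad}. Throughout, $p$ is assumed (as is implicitly required for $\OU$, $\V$, $\U$ to be defined) to be a PDG function with a fast track at $\bar{x}$. Introduce one auxiliary coordinate $t\in\R$ and set
\[
\Phi:\R^m\to\R^{m+1},\quad\Phi(x)=(x,q(x)),\qquad h:\R^{m+1}\to\overline{\R},\quad h(y,t)=p(y)+t,
\]
so that $h\circ\Phi=f$ and $\Phi\in\mathcal{C}^2$ because $q\in\mathcal{C}^2$. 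The first step is to check that $h$ is PDG with a fast track at $\Phi(\bar{x})=(\bar{x},q(\bar{x}))$; I would obtain this by lifting the data of $p$, using primal functions $(y,t)\mapsto f_i(y)+t$ and $(y,t)\mapsto\varphi_j(y)$ with the same dual set, so that the active manifold becomes $\M_h=\M_p\times\R$, the $\V$-space of $h$ is $\V\times\{0\}$, the fast track of $p$ lifts to a fast track of $h$ in which the new coordinate is free (its $\U$-Lagrangian picks up the linear term $q(\bar{x})+t$), and a semiorthonormal $\U$-basis for $h$ is obtained from $\OU$ by adjoining a unit row and column in the $t$-direction (equivalently, $h$ is partly smooth relative to $\M_p\times\R$ and Theorem~\ref{thm:FT=PS} applies). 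I expect this inheritance verification, routine though it is, to be the most labour-intensive step of the argument.

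Second, I would verify the transversality hypothesis. Since $\M_h=\M_p\times\R$ and the normal space to the active manifold of $p$ at $\bar{x}$ is $\V$, the normal space to $\M_h$ at $\Phi(\bar{x})$ is $\V\times\{0\}$; for $z=(z_{\V},0)$ with $z_{\V}\in\V$ one computes $\nabla\Phi(\bar{x})^\top z=z_{\V}$ (the $\nabla q(\bar{x})$ block of $\nabla\Phi(\bar{x})^\top$ is annihilated by the zero component), so $\nabla\Phi(\bar{x})^\top z=0$ forces $z=0$. Thus $\Phi$ is transversal to $\M_h$ at $\bar{x}$ in the sense of Definition~\ref{df:trans}, and Theorem~\ref{thm:main} applies: $f=h\circ\Phi$ has a fast track at $\bar{x}$, and for any $\bar{g}_p\in\ri\partial p(\bar{x})$ the vector $\bar{g}_h:=(\bar{g}_p,1)$ belongs to $\ri\partial h(\Phi(\bar{x}))$ and may be used in \eqref{eq:Uspace}--\eqref{eq:Ugrad}.

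Finally, I would simplify those formulas. Since $\partial h(\Phi(\bar{x}))=\partial p(\bar{x})\times\{1\}$, every $g\in\partial h(\Phi(\bar{x}))$ has the form $(g_p,1)$ with $\nabla\Phi(\bar{x})^\top g=g_p+\nabla q(\bar{x})$, and likewise $\nabla\Phi(\bar{x})^\top\bar{g}_h=\bar{g}_p+\nabla q(\bar{x})$. Substituting into \eqref{eq:Uspace}, the common shift $\nabla q(\bar{x})$ cancels from both sides and the condition collapses to $d^\top g_p=d^\top\bar{g}_p$ for all $g_p\in\partial p(\bar{x})$, which is exactly the normal-cone characterisation of the $\U$-space of $p$ at $\bar{x}$ (Definition~\ref{df:UV-decom}, with $\bar{g}_p\in\ri\partial p(\bar{x})$); hence the $\U$-space of $f$ coincides with that of $p$ and $\OU$ also serves as a $\U$-basis for $f$. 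For the $\U$-gradient, put $\bar{g}_f:=\nabla\Phi(\bar{x})^\top\bar{g}_h=\bar{g}_p+\nabla q(\bar{x})$, which lies in $\ri\partial f(\bar{x})$ as in the proof of Theorem~\ref{thm:main}. Using Definition~\ref{def:ulagugrad} together with the identity $\nabla L_{\U}f(\bar{x})=\OU^\top g$ valid for every $g\in\partial f(\bar{x})$ (the corollary following Theorem~\ref{thm:main} with $\Phi=\Id$, combined with \eqref{eq:w}), and the analogous identity for $p$, one gets
\[
\nabla_{\U}f(0)=\OU\OU^\top\bar{g}_f=\OU\OU^\top\bar{g}_p+\OU\OU^\top\nabla q(\bar{x})=\nabla_{\U}p(\bar{x})+\OU\OU^\top\nabla q(\bar{x}).
\]
Since $\OU^\top\nabla q(\bar{x})=(\nabla q(\bar{x}))_{\U}$ by the definition of the $\U$-restriction, this is the stated formula.
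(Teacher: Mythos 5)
Your proposal is correct and follows essentially the same route as the paper: the identical lifting $\Phi(x)=(x,q(x))$, $h(y,t)=p(y)+t$, a transversality check for $\M_h=\M_p\times\R$, and then Theorem~\ref{thm:main} with $\bar g_h=(\bar g_p,1)$, after which the $\nabla q(\bar x)$ shift cancels in \eqref{eq:Uspace} and survives in \eqref{eq:Ugrad}. The only cosmetic differences are that you verify transversality via the normal-cone form of Definition~\ref{df:trans} where the paper uses the equivalent tangent-cone form, and you spell out the inheritance of the PDG/fast-track structure by $h$, which the paper leaves implicit.
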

\begin{proof}
Let
\begin{align*}
\Phi:&\R^m\to\R^{m+1},\Phi(x)=(x,q(x))\\
h:&\R^{m+1}\to\R,h(z_1,z_2)=p(z_1)+z_2,
\end{align*}
so that $f(x)=(h\circ\Phi)(x)$ and $\Phi$ is smooth. Then we have
\begin{align*}
\nabla\Phi(x)&=\left[\begin{array}{c}
\Id_m\\\nabla q(x)^\top\end{array}\right],\\
\partial h(z_1,z_2)&=\left\{\left[\begin{array}{c}
g\\1\end{array}\right]:g\in\partial p(z_1)\right\}\qquad\mbox{\cite[Proposition 10.5]{rockwets}}.
\end{align*}
Denote the manifold of $p$ by $\M_p.$ Then the manifold of $h$ is $\M_h=\{(z_1,z_2):z_1\in\M_p,z_2\in\R\}$. We need to show that $\Phi$ is transversal to $\M_h.$ We have $\ran\nabla\Phi=\R^n\times\alpha\nabla q,\alpha\in\R.$ The tangent cone to $\M_h$ is $T_{\M_h}=T_{\M_p}\times\R,$ so we have
\begin{align*}
\ran\nabla\phi+T_{\M_h}&=(\R^n\times\alpha\nabla q)+(T_{\M_p\times\R}),\\
&=\R^n\times\R.
\end{align*}
Thus, $\Phi$ is transversal to $\M_h$, and the assumptions of Theorem \ref{thm:main} hold. Choose any $\bar{g}\in\ri\partial p(\bar{x}).$ Then by Theorem \ref{thm:main},
\begin{align*}
\nabla_{\U}f(0)&=\OU\OU^\top\nabla\Phi(\bar{x})^\top\left[\begin{array}{c}\bar{g}\\1\end{array}\right],\\
&=\OU\OU^\top\left[\Id_m~~\nabla q(\bar{x})\right]\left[\begin{array}{c}\bar{g}\\1\end{array}\right],\\
&=\OU\OU^\top\bar{g}+\OU\OU^\top\nabla q(\bar{x}),\\
&=\nabla_{\U}p(\bar{x})+\OU\OU^\top\nabla q(\bar{x}).\end{align*}
Then by the definition of $\U$ in Theorem \ref{thm:main}, and denoting the $\U$-space of $f$ and the $\U$-space of $p$ by $\U_f$ and $\U_p$, respectively, we have
\begin{align*}
\U_f&=\left\{d:d^\top\nabla\Phi(\bar{x})^\top \left[\begin{array}{c}g\\1\end{array}\right]=d^\top\Phi(\bar{x})^\top \left[\begin{array}{c}\bar{g}\\1\end{array}\right]~\forall g\in\partial p(\bar{x})\right\},\\
&=\{d:d^\top(g+\nabla q(\bar{x}))=d^\top(\bar{g}+\nabla q(\bar{x}))~\forall g\in\partial p(\bar{x})\},\\
&=\{d:d^\top g=d^\top\bar{g}~\forall g\in\partial p(\bar{x})\}=\U_p,\end{align*}and the $\U$-basis for $p$ at $\bar{x},$ $\OU$, is the $\U$-basis for $f$ at $\bar{x}$ as well.
\end{proof}
A particularly useful application of Theorem \ref{ex:smooth} is the $\ell_2$-regularization of nonsmooth functions. The following corollary states the result.
\begin{cor}
For $p:\R^m\to\overline{\R}$ nonsmooth and $\lambda>0$, define $f$ to be the \emph{$\ell_2$-regularization} of $p$:
$$f=p+\frac{\lambda}{2}\|\cdot\|^2.$$Given $\bar{x}\in\dom p$, let $\OU$ be the $\U$-basis matrix for $p$ at $\bar{x}$. Then
$$\nabla_{\U}f(0)=\nabla_{\U}p(\bar{x})+\lambda\OU\OU^\top\bar{x}=\nabla_{\U}p(\bar{x})+\lambda\OU\bar{x}_{\U},$$and the $\U$-space of $f$ at $\bar{x}$ is the $\U$-space of $p$ at $\bar{x}$.
\end{cor}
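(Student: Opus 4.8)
The plan is to recognize this corollary as the special case of Theorem~\ref{ex:smooth} obtained by taking $q=\frac{\lambda}{2}\|\cdot\|^2$. First I would check the hypotheses of Theorem~\ref{ex:smooth}: the function $q(x)=\frac{\lambda}{2}\|x\|^2$ is a quadratic, hence $\mathcal{C}^2$ on all of $\R^m$, while $p$ is nonsmooth by assumption, so $f=p+q$ has exactly the form $p+q$ required there. Since $q$ is finite everywhere, $\dom f=\dom p$, so $\bar{x}\in\dom f$ and Theorem~\ref{ex:smooth} applies at $\bar{x}$ with the same $\U$-basis matrix $\OU$ of $p$ at $\bar{x}$.

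Next I would carry out the only computation involved, namely $\nabla q(\bar{x})$. Since $q(x)=\frac{\lambda}{2}\sum_i x_i^2$, we have $\nabla q(x)=\lambda x$, hence $\nabla q(\bar{x})=\lambda\bar{x}$. Substituting into the conclusion of Theorem~\ref{ex:smooth} gives
$$\nabla_{\U}f(0)=\nabla_{\U}p(\bar{x})+\OU\OU^\top(\lambda\bar{x})=\nabla_{\U}p(\bar{x})+\lambda\OU\OU^\top\bar{x}.$$
Recalling from the Restriction definition that $\bar{x}_{\U}=\OU^\top\bar{x}$, the second term rewrites as $\lambda\OU\bar{x}_{\U}$, which produces both displayed expressions in the statement. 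The claim about the $\U$-space is then immediate, since Theorem~\ref{ex:smooth} already asserts that the $\U$-space of $f$ at $\bar{x}$ equals the $\U$-space of $p$ at $\bar{x}$, with $\OU$ a basis matrix for both.

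I do not anticipate any genuine obstacle: the entire argument is a direct invocation of Theorem~\ref{ex:smooth} together with the elementary gradient computation $\nabla\!\left(\frac{\lambda}{2}\|\cdot\|^2\right)(\bar{x})=\lambda\bar{x}$ and the definition of the $\U$-restriction, so the proof is essentially one line once Theorem~\ref{ex:smooth} is in hand.
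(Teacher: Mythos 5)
Your proposal is correct and matches exactly what the paper intends: the corollary is presented as a direct application of Theorem~\ref{ex:smooth} with $q=\frac{\lambda}{2}\|\cdot\|^2$, so the only work is the computation $\nabla q(\bar{x})=\lambda\bar{x}$ and the identification $\bar{x}_{\U}=\OU^\top\bar{x}$, both of which you carry out correctly.
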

\begin{thm}[Sum rule]\label{ex:sum}
For each $i\in I=\{1,2,\ldots,k\},$ let $f_i:\R^n\to\overline{\R}$ be Clarke regular and $\bar{x}\in\dom f_i$. Define$$\Phi:\R^n\to\R^{nk},\Phi(x)=(x,x,\ldots,x)\mbox{ \emph{and} }h_i:\R^n\to\R,h_i(x)=f_i(x).$$ Suppose that each $f_i=h_i\circ\Phi_i$ satisfies the assumptions of Theorem \ref{thm:chainfasttrack} for $\bar{x}$; that is, $\U_i$ and $\V_i$ define the $\VU$-decomposition of $h_i$ at $\bar{x}$, and $\Phi_i$ is transversal to manifold $\M_i$ at $\bar{x}.$
Assume the condition
\begin{equation}\label{eq:sum1}
\sum\limits_{i\in I}z_i=0,z_i\in N_{\M_i}(\bar{x})\mbox{ for each }i\in I\Rightarrow z_i=0\mbox{ for each }i\in I.\end{equation}Define $h:\R^{nk}\to\R,$ $h(z)=\sum_{i\in I}h_i(z_i)$. Then the function $f=h\circ\Phi,$ which simplifies to
$$f(x)=\sum\limits_{i=1}^kf_i(x),$$satisfies the assumptions of Theorem \ref{thm:chainfasttrack}, and
\begin{align}
\U&=\bigcap\limits_{i\in I}\U_i,\label{sumeq1}\\
\nabla_{\U}f(0)&=\OU\OU^\top\sum\limits_{i=1}^k\bar{g}_i=\Proj_{\U}\sum\limits_{i=1}^k\bar{g}_i,\label{sumeq2}\end{align}
for any $\bar{g}_i\in\ri h_i(\Phi_i(\bar{x}))$. 
\end{thm}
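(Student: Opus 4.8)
The plan is to view the sum rule as a special instance of the chain rule (Theorem~\ref{thm:main}) applied to the diagonal embedding $\Phi(x)=(x,\ldots,x)$ and the separable function $h(z)=\sum_{i\in I}h_i(z_i)$, exactly as in the Separability Lemma~\ref{lem:sep} but now composed with a non-injective $\Phi$. First I would record that each $h_i$, being PDG with a fast track, is partly smooth at $\bar x$ relative to $\M_i$ by Theorem~\ref{thm:FT=PS}, and hence (by the product rule for Clarke regular functions, \cite[Proposition 10.5]{rockwets}) $h$ is partly smooth at $(\bar x,\ldots,\bar x)$ relative to $\M=\M_1\times\cdots\times\M_k$, with $\partial h(\bar x,\ldots,\bar x)=\partial h_1(\bar x)\times\cdots\times\partial h_k(\bar x)$ and $N_{\M}(\bar x,\ldots,\bar x)=\prod_i N_{\M_i}(\bar x)$.

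Next I would verify that $\Phi$ is transversal to $\M$ at $\bar x$. Since $\nabla\Phi(\bar x)=[\Id_n;\cdots;\Id_n]$, we have $\nabla\Phi(\bar x)^\top(z_1,\ldots,z_k)=\sum_i z_i$, so the transversality condition
\[\{z\in N_{\M}(\bar x,\ldots,\bar x):\nabla\Phi(\bar x)^\top z=0\}=\{0\}\]
reads precisely as the hypothesis \eqref{eq:sum1}: $\sum_i z_i=0$ with $z_i\in N_{\M_i}(\bar x)$ forces all $z_i=0$. Hence the assumptions of Theorem~\ref{thm:chainfasttrack} (and thus of Theorem~\ref{thm:main}) hold, and $f=h\circ\Phi$ has a fast track at $\bar x$; note $f(x)=\sum_i h_i(x)=\sum_i f_i(x)$.

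Then I would simply feed this into the two conclusions of Theorem~\ref{thm:main}. For the $\U$-space: pick $\bar g_i\in\ri\partial h_i(\bar x)$, so $\bar g=(\bar g_1,\ldots,\bar g_k)\in\ri\partial h(\bar x,\ldots,\bar x)$; then
\[\U=\{d\in\R^n:d^\top\nabla\Phi(\bar x)^\top g=d^\top\nabla\Phi(\bar x)^\top\bar g\ \forall g\in\partial h(\bar x,\ldots,\bar x)\}.\]
Because $g=(g_1,\ldots,g_k)$ ranges over the product $\prod_i\partial h_i(\bar x)$ and $\nabla\Phi(\bar x)^\top g=\sum_i g_i$, varying one $g_i$ at a time shows this equals $\{d:d^\top g_i=d^\top\bar g_i\ \forall g_i\in\partial h_i(\bar x),\ \forall i\}=\bigcap_i\U_i$, giving \eqref{sumeq1}. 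For the $\U$-gradient, I need to be a little careful that the $\U$-basis matrix $\OU$ of $h$ at $(\bar x,\ldots,\bar x)$ restricts correctly: since $\U_h=\prod_i\U_i$ under the product structure while the effective subspace on the $x$-side is $\bigcap_i\U_i$, one checks that $\OU\OU^\top$ acts as the orthogonal projector and that $\nabla\Phi(\bar x)^\top\bar g=\sum_i\bar g_i$, so \eqref{eq:Ugrad} yields $\nabla_\U f(0)=\OU\OU^\top\sum_i\bar g_i=\Proj_\U\sum_i\bar g_i$, which is \eqref{sumeq2}.

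The main obstacle I anticipate is not the transversality computation (which is immediate from \eqref{eq:sum1}) but the bookkeeping around the two different copies of the $\U$-space: the one living in $\R^{nk}$ for $h$ versus the intersection $\bigcap_i\U_i\subseteq\R^n$ for $f$, and making the projection identity $\OU\OU^\top\nabla\Phi(\bar x)^\top\bar g=\Proj_{\bigcap_i\U_i}\sum_i\bar g_i$ come out cleanly. I would handle this by noting that Theorem~\ref{thm:main} already delivers $\nabla_\U f(0)$ as an element of the $\U$-space of $f$ (namely $\bigcap_i\U_i$) expressed via the $h$-side basis, and then observing that $\OU\OU^\top$ composed with $\nabla\Phi(\bar x)^\top$ is exactly orthogonal projection onto that intersection — or, alternatively, invoking the Separability Lemma~\ref{lem:sep} directly with $\Phi_i=\Id$ on the $x$-side and then summing, which sidesteps the indexing entirely.
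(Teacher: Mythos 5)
Your proposal is correct and follows essentially the same route as the paper's proof: reduce to Theorem~\ref{thm:main} via the diagonal map $\Phi$ and the separable $h$, identify $\partial h(\Phi(\bar x))$ as the product of the $\partial h_i(\bar x)$, obtain $\U=\bigcap_i\U_i$ by varying one $g_i$ at a time in \eqref{eq:Uspace}, and read off \eqref{sumeq2} from \eqref{eq:Ugrad}. If anything, you are more explicit than the paper in showing that hypothesis \eqref{eq:sum1} is exactly the transversality of $\Phi$ to the product manifold $\M_1\times\cdots\times\M_k$ (the paper only gestures at this via Lemma~\ref{lem:sep}), and your care about which copy of the $\U$-space the basis $\OU$ refers to is a reasonable resolution of an ambiguity present in the statement of Theorem~\ref{thm:main} itself.
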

\begin{proof}
By a similar argument to the proof of Lemma \ref{lem:sep}, we have that $\M=\M_1\cup\cdots\cup\M_k$ is the manifold of $h$ with respect to $\bar{x}$ and that $\Phi$ is transversal to $\M$ at $\bar{x}$. Each $h_i$ is Clarke regular, so $h=h_1+\cdots+h_k$ is Clarke regular and $\partial h=\partial h_1+\cdots+\partial h_k.$ Thus, $f$ is partly smooth at $\bar{x}$ and $\M$ defines a fast track for $h$ at $\bar{x}$ by Theorem \ref{thm:FT=PS}(i). Therefore, the assumptions of Theorem \ref{thm:chainfasttrack} are satisfied. This, together with \eqref{sumeq1}, gives us the conditions Theorem \ref{thm:main} holds for $f$. We have
\begin{align}
\nabla\Phi(x)&=\left[\Id_n~~\cdots~~\Id_n\right]^\top\in\R^{nk\times n},\label{eq:id1}\\
\partial h(\Phi(x))&=\partial f_1(x)\times\cdots\times\partial f_k(x)\qquad\qquad\mbox{\cite[Proposition 10.5]{rockwets}.}\nonumber\end{align}
For each $i\in I,$ choose any $\bar{g}_i\in\ri\partial f_i(x).$ Then $\bar{g}=\left[\bar{g}_1^\top\cdots\bar{g}_k^\top\right]^\top\in\ri h(\Phi(\bar{x}))$, and
$$\nabla\Phi(\bar{x})^\top \bar{g}=\left[\Id_n~~\cdots~~\Id_n\right]\bar{g}=\sum\limits_{i=1}^k\bar{g}_i.$$Now according to \eqref{eq:Uspace}, we find $\U$ by finding all $d\in\R^n$ such that for all $i\in I$ and for all $g_i\in\partial f_i(\bar{x}),$ denoting $g_1+\cdots+g_k$ by $g\in\partial h(\Phi(\bar{x})),$
$$d^\top\nabla\Phi(\bar{x})^\top g=d^\top\nabla\Phi(\bar{x})^\top \bar{g}.$$By \eqref{eq:id1}, this reduces to
$$\sum\limits_{i=1}^kd^\top(g_i-\bar{g}_i)=0,\qquad\mbox{ for all }g_i\in\partial f_i(\bar{x}).$$
So we have
\begin{equation}\label{uint}
\U=\left\{d:\sum\limits_{i=1}^kd^\top(g_i-\bar{g}_i)=0~\forall g_i\in\partial f_i(\bar{x}),~\forall i\in I\right\}.
\end{equation}
Recall that for each $i\in I,$
$$\U_i=\{d_i\in\R^n:d_i^\top(g_i-\bar{g}_i)=0~\forall g_i\in\partial f_i(\bar{x})\}.$$Let $d\in\bigcap_{i\in I}\U_i.$  Then clearly $d\in\U,$ hence, $\bigcap_{i\in I}\U_i\subseteq\U.$ Let $d\in\U.$ Note that $\bar{g}_i\in\partial f_i(\bar{x})$ for all $i\in I.$ For any arbitrary $j\in I$ fixed, select any $g_j\in\partial f_j(\bar{x})\setminus\{\bar{g}_j\},$ and set $g_i=\bar{g}_i$ for all $i\in I\setminus\{j\}.$ Then the summation of \eqref{uint} reduces to $d^\top(g_j-\bar{g}_j)=0,$ thus, $d\in\U_j.$ Since $j$ is arbitrary in $I$, we have that $d\in\U_i$ for all $i\in I,$ thus, $\U\subseteq\bigcap_{i\in I}\U_i.$ Therefore, \eqref{sumeq1} is true. We have that \eqref{sumeq2} is true by \eqref{eq:Ugrad}.
\end{proof}
\noindent Example \ref{no-cq} shows that \eqref{eq:sum1} is a necessary condition for the sum rule to hold. The inspiration for Theorems \ref{ex:smooth} and \ref{ex:sum} are Corollaries 4.6 and 4.7 of \cite{Lewis02}, where the author develops smooth perturbation and sum rules for partly smooth functions.  This allows us to determine the $\U$-gradients and $\U$-spaces for smooth perturbations and sums of PDG functions.\par  Example \ref{ex:ell1reg}, on the $\ell_1$-regularization problem, provided
a PDG structure that is not strongly transversal. As mentioned, this does not preclude
the $\ell_1$-norm from having a fast track at every point. Using the sum rule, we can now show that the $\ell_1$-regularization problem has a fast track and determine the corresponding $\U$-gradient. 
\begin{ex}\label{ex:ell1fasttrack} Let $f:\R^n\to\overline{\R}$ be $\mathcal{C}^2$ and $\tau>0$. Then the function $f+\tau\|\cdot\|_1$ has a fast track at any point $\bar{x}\in\dom f$.  
\end{ex}
\begin{proof}Set $h_1=f$, $\Phi_1=\Id$, $h_2=\tau\|\cdot\|_1$ and $\Phi_2=\Id$. We will use the sum rule on these two component functions.\\
\noindent Note that $h_1 \in \mathcal{C}^2$ implies $\U = \R^n$ and $\V=0$.  The $\U$-Lagrangian is given by
$$L_{\U}h_1(u;\bar{g})=\min_{v \in \V} \{f_1(\bar{x} + (\OU u+\OV v)) -\bar{g}^\top\OV v\}=h_1(\bar{x} + u),$$
which is $\mathcal{C}^2$ in $\U$.  The solution mapping to the $\U$-Lagrangian is trivially
$$W_{\U}h_1(u;\bar{g})=\arg\min_{v \in \V} \{h_1(\bar{x} + (\OU u+\OV v)) -\bar{g}^\top\OV v\}=\{0\}.$$
Hence, $v(u) = 0$ is a $\mathcal{C}^2$ selection of $W_{\U}h_1(u;\bar{g})$, and therefore $v(u)$ is a fast track of $h_1$ at $\Phi_1(\bar{x})$.  Since $\ran(\nabla\Phi_1(\bar{x}))=\R^n$, transversality holds.  Thus, $f_1 = h_1 \circ \Phi_1$ satisfies the conditions of Theorem \ref{thm:chainfasttrack} with $\M_1 = \R^n$.\\
\noindent Turning our attention to $f_2 = h_2 \circ \Phi_2$, notice that
$$\partial (h_2)(\bar{x})=\tau\left\{g:\begin{array}{rl}g_i=\sgn(\bar{x}_i)&\mbox{if }\bar{x}_i\neq0,\\g_i=[-1, 1]&\mbox{if }\bar{x}_i=0.\end{array}\right\}.$$
This gives
$$\V=\{v\in\R^n :v_i=0\mbox{ whenever }\bar{x}_i\neq0\}\quad\mbox{ and }\quad\U= \{u\in\R^n:u_i=0~\mbox{whenever}~\bar{x}_i = 0 \}.$$
Let $\OU$ and $\OV$ be the corresponding basis matrices. Define $\bar{g}$ such that 	
$\bar{g}_i=\tau\sgn(\bar{x}_i)$ and notice that $\bar{g} \in \ri \partial (\|\cdot\|_1)(\bar{x})$.   
Also, notice that given any $v \in \V$, we have that $\bar{g}^\top \OV v = 0$.
As such, the solution mapping of the $\U$-Lagrangian is given by
$$\begin{array}{rcl}W_{\U}h_2(u;\bar{g})&=&\min_{v\in\V}\{\|\bar{x}+(\OU u+\OV v)\|_1-\bar{g}^\top\OV v\},\\&=&\arg\min_{v\in\V}\{\|\bar{x}+(\OU u+\OV v)\|_1\}=\{0\}.\end{array}$$
Hence, $v(u)$ is a $\mathcal{C}^2$ selection of $W_{\U}h_2(u;\bar{g})$.  Applying this to the $\U$-Lagrangian, we find that 
$$\begin{array}{rcl}L_{\U}f(u;\bar{g})&=&\min_{v\in\V}\{\|\bar{x}+(\OU u+\OV v)\|_1-\bar{g}^\top\OV v\},\\&=&\|\bar{x}+\OU u\|_1=\sum\limits_{i\notin A(\bar{x})}|\bar{x}_i+u_i|,\end{array}$$where $A(\bar{x})=\{i:\bar{x}_i=0\}$ is the active set at $\bar{x}$. Notice that $L_{\U}f(\,\cdot~;\bar{g})$ is (locally) $\mathcal{C}^2$, since $i \notin A(\bar{x})$ implies that $|\bar{x}_i + u_i|$ is linear near $0$.  Finally, since $\ran(\nabla\Phi_1(\bar{x}))=\R^n$, transversality holds, so $f_2 = h_2 \circ \Phi_2$ satisfies the conditions of Theorem \ref{thm:chainfasttrack}.

Since $\M_1 = \R^n$, we have that \eqref{eq:sum1} holds and we may now apply the sum rule.  We find that $f(x)+\tau\|x\|_1$ satisfies the conditions of Theorem \ref{thm:chainfasttrack}, so it has a fast track and we have
\begin{align*}\U=\U_1\cap\U_2=\U_2&=\{ u\in \R^n : u_i = 0~\mbox{whenever}~\bar{x}_i = 0 \},\\
\nabla_{\U}(f+\tau\|\cdot\|_1)(0)&=\OU\OU^\top\sum\limits_{i=1}^k\bar{g}_i=\Proj_{\U}\nabla f(\bar{x}).\end{align*}
\end{proof}
A prominent function that has the format of Example \ref{ex:ell1fasttrack} is the LASSO function. By applying the example to $f(x)=\frac{1}{2}\|Ax-b\|_2^2$, we obtain the following result.
\begin{cor}[LASSO]
Consider the LASSO problem
$$\min\left\{\frac{1}{2}\|Ax-b\|^2_2+\tau\|x\|_1\right\}.$$This function has a fast track at any point $\bar{x}\in\R^n$, and$$\nabla_{\U}\left(\frac{1}{2}\|A\cdot-b\|_2^2+\tau\|\cdot\|_1\right)(0)=\Proj_{\U}(A^\top A\bar{x}-A^\top b),$$
$$\mbox{\emph{where} }\U=\{u\in\R^n:u_i=0\mbox{ \emph{whenever} }\bar{x}_i=0\}.$$
\end{cor}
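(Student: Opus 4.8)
The plan is to recognize the LASSO objective as a special case of the $\ell_1$-regularized function $f+\tau\|\cdot\|_1$ already analyzed in Example \ref{ex:ell1fasttrack}, taking $f(x)=\frac{1}{2}\|Ax-b\|_2^2$. The first step is to check that this $f$ meets the hypotheses of Example \ref{ex:ell1fasttrack}. Expanding, $f(x)=\frac{1}{2}x^\top A^\top A x-b^\top A x+\frac{1}{2}\|b\|^2$ is a quadratic polynomial, hence $\mathcal{C}^\infty$ on all of $\R^n$; in particular it is $\mathcal{C}^2$ and $\dom f=\R^n$, so every $\bar{x}\in\R^n$ is an admissible base point.

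Applying Example \ref{ex:ell1fasttrack} to this $f$ and the given $\tau>0$ then yields, for any $\bar{x}\in\R^n$, that $f+\tau\|\cdot\|_1$ has a fast track at $\bar{x}$, that the $\U$-space is $\U=\{u\in\R^n:u_i=0\text{ whenever }\bar{x}_i=0\}$, and that $\nabla_{\U}(f+\tau\|\cdot\|_1)(0)=\Proj_{\U}\nabla f(\bar{x})$.

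It only remains to substitute the gradient of the least-squares term: $\nabla f(\bar{x})=A^\top(A\bar{x}-b)=A^\top A\bar{x}-A^\top b$. Plugging this into the expression from Example \ref{ex:ell1fasttrack} gives $\nabla_{\U}\bigl(\tfrac{1}{2}\|A\cdot-b\|_2^2+\tau\|\cdot\|_1\bigr)(0)=\Proj_{\U}(A^\top A\bar{x}-A^\top b)$, which is the claim, while the description of $\U$ is copied verbatim from the example. There is no genuine obstacle here — the statement is a direct specialization of Example \ref{ex:ell1fasttrack} — so the only things to be careful about are the (routine) verification that the least-squares term is genuinely $\mathcal{C}^2$ and the (routine) computation of its gradient.
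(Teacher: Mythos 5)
Your proposal is correct and matches the paper's own treatment: the corollary is obtained exactly by specializing Example \ref{ex:ell1fasttrack} to $f(x)=\frac{1}{2}\|Ax-b\|_2^2$ and substituting $\nabla f(\bar{x})=A^\top A\bar{x}-A^\top b$. No gaps.
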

The following example illustrates the fact that \eqref{eq:sum1} is necessary for the sum rule to function properly. We construct functions that do not comply with \eqref{eq:sum1}, and show that the sum rule fails.
\begin{ex}\label{no-cq}
Following the notation of Theorem \ref{ex:sum}, let $f_1,f_2:\R^2\to\overline{\R}$ be defined by
$$f_1(x,y)=\iota_B=\begin{cases}0,&\mbox{if }(x,y)\in B,\\\infty,&\mbox{if }(x,y)\not\in B,\end{cases}\qquad f_2(x,y)=\iota_L=\begin{cases}0,&\mbox{if }(x,y)\in L,\\\infty,&\mbox{if }(x,y)\not\in L,\end{cases},$$where $B=\{(x,y):\|(x,y)-(1,0)\|=1\}$ and $L=\{(0,y):y\in\R\}$. Then the sum rule does not apply at $(\bar{x},\bar{y})=(0,0)$.
\end{ex}
\begin{proof}
At $(\bar{x},\bar{y})$, we find that the manifolds of $f_1$ and $f_2$, respectively, are
\begin{equation*}
\M_1=B,\qquad\M_2=L.
\end{equation*}
At the origin, we have
$$\U_1=\{(0,y):y,z\in\R\},~\U_2=\{(0,y):y\in\R\}\Rightarrow\U_1\cap\U_2=\U_2.$$
Hence, the normal cones to the manifolds at the origin are
\begin{equation*}
N_{\M_1}(\bar{x},\bar{y})=\{(x,0):x\in\R\},\qquad N_{\M_2}(\bar{x},\bar{y})=\{(x,0):x\in\R\}.
\end{equation*}
Then, we observe that
$$(1,0)\in N_{\M_1}(\bar{x},\bar{y}),~(-1,0)\in N_{\M_2}(\bar{x},\bar{y}),\mbox{ and }(1,0)+(-1,0)=(0,0).$$
Thus, $f_1$ and $f_2$ do not comply with \eqref{eq:sum1} at $(\bar{x},\bar{y})$. 
However, $$f(x,y)=f_1(x,y)+f_2(x,z)=\iota_{B\cap L}=\iota_{(0,0)},$$ so $\U=\{0\}$ at $(\bar{x},\bar{y})$. The sum rule fails.
\end{proof}
We finish this section with an example of a useful family of functions, the convex finite-max functions. These functions are commonly used in $\VU$-theory, because they are simple enough to manage and complex enough to showcase the intricacies of the theory. We verify that the $\U$-gradient and $\U$-space are what we expect them to be in this special case.
\begin{ex}[Convex finite-max]\label{ex:finmax}
Let $f=h\circ\Phi$ be a convex finite-max function on $\R^n$:
$$\Phi(x)=(\Phi_1(x), \Phi_2(x), \ldots, \Phi_m(x))\in\mathcal{C}^2,\qquad h(y)=\max y_i,$$with $\Phi_i$ convex for each $i$. For $\bar{x}\in\dom f,$ suppose that the active set $A(\bar{x})=\{i:\Phi_i(\bar{x})=\Phi(\bar{x})\}$ defines an affinely independent set of subfunction gradients $\nabla\Phi_i$. Then the assumptions of Theorem \ref{thm:main} hold and
\begin{align}
\U&=\Span\limits_{i\in A(\bar{x})}\left[\begin{array}{c}\nabla\Phi_i(\bar{x})-\nabla\Phi_j(\bar{x})\end{array}\right]\mbox{ \emph{with} }j\in A(\bar{x})\mbox{ \emph{fixed},}\label{cfm1}\\
\nabla_{\U}f(0)&=\OU\OU^\top\nabla\Phi(\bar{x})^\top \sum\limits_{i\in A(\bar{x})}\alpha_ie_i,\label{cfm2}
\end{align}
where $e_i\in\R^n$ is the $i$\textsuperscript{th} canonical vector and
$$\alpha_i\geq0~\forall i\in A(\bar{x}),~~\sum\limits_{i\in A(\bar{x})}\alpha_i=1.$$
\end{ex}
\begin{proof}
By Definitions \ref{df:pdg}, \ref{strongtrans} and Theorem \ref{thm:fast track}, $f$ is a PDG function with a fast track at $\bar{x}$ relative to the manifold $\M$. Note that $\M=\R^n$ if $|A(\bar{x})|=1$, and if $|A(\bar{x})|>1$, then $$\M=\{x\in\R^n:\Phi_i(x)=\Phi_j(x),i,j\in A(\bar{x}),i\neq j\}.$$ Since $h$ is a maximum of linear functions (i.e. a polyhedral function), $h$ is partly smooth at $\Phi(\bar{x})$ relative to $\M$ \cite[Example 3.4]{Lewis02} and we have
\begin{equation}\label{eq:parh}
\partial h(y)=\conv\limits_{i\in A(y)}\nabla y_i=\conv\limits_{i\in A(y)}e_i.\end{equation}
Since the set $\{\nabla\Phi_i(\bar{x}):i\in A(\bar{x})\}$ is linearly independent, $\Phi$ is transversal to $\M$ at $\bar{x}$. Thus, the assumptions of Theorem \ref{thm:main} hold. By \eqref{eq:Uspace}, we have
$$\U = \{d \in \R^m : d^\top\nabla \Phi(\bar{x})^\top g  = d^\top\nabla\Phi(\bar{x})^\top  \bar{g}~\mbox{ for all }~g \in \partial h(\Phi(\bar{x})) \},$$which proves \eqref{cfm1}. Then $\bar{g}=\sum_{i\in A(\bar{x})}\alpha_ie_i\in\ri h(\Phi(\bar{x}))$. Applying Theorem \ref{thm:main} completes the proof of \eqref{cfm2}.
\end{proof}

\section{Conclusion}\label{sec:conclusion}

We have established calculus rules for the $\VU$-decomposition of primal-dual gradient structured functions with fast tracks. In doing so, we introduced some new notation to make the inner workings of $\VU$-theory clearer. We refer to the objects $x_{\V}$ and $x_{\U}$ as \emph{restrictions} of vector $x$ to the $\V$-space and the $\U$-space, respectively, rather than use the term \emph{projections} that is found in existing literature. This allows for the term \emph{projection} to be used in the traditional sense as the orthogonal projection of a vector onto a set. We also introduced the $\U$-gradient as a separate object from the gradient of the $\U$-Lagrangian and presented the relationship between the two. The difference between transversality and nondegeneracy was identified and equivalence conditions were given.

One avenue of further investigation is the second-order calculus of $\VU$-theory. The first-order rules are provided here, but properties and characterizations of the $\U$-Hessian would also be useful. At the moment, it is unclear in exactly which direction to head when forming a chain rule for the $\U$-Hessian, since there are many definitions of second-order differentiability \cite[\S 13]{rockwets}. We leave the matter for future consideration.

\section*{Acknowledgement} The authors wish to thank the anonymous referee for their diligent work on this paper and in particular for their help with improving the presentation of Lemma \ref{lem:horizon}.

\small
\bibliographystyle{alpha}

\newcommand{\etalchar}[1]{$^{#1}$}
\def\cprime{$'$}

\end{document}